\newcommand{\email}{}
\definecolor{labelkey}{rgb}{0,0.08,0.45}
\definecolor{refkey}{rgb}{0,0.6,0.0}
\definecolor{Brown}{rgb}{0.45,0.0,0.05}
\definecolor{dgreen}{rgb}{0.00,0.49,0.00}
\definecolor{dblue}{rgb}{0,0.08,0.75}
\renewcommand{\le}{\ensuremath{\leqslant}}
\renewcommand{\ge}{\ensuremath{\geqslant}}
\newcommand{\minimize}[2]{\ensuremath{\underset{\substack{{#1}}}%
{\text{\rm minimize}}\;\;#2 }}
\newcommand{\scal}[2]{{\left\langle{{#1}\mid{#2}}\right\rangle}}
\newcommand{\emp}{\ensuremath{{\varnothing}}}
\newcommand{\Id}{\ensuremath{\operatorname{I}}}
\newcommand{\RR}{\ensuremath{\mathbb{R}}}
\newcommand{\NN}{\ensuremath{\mathbb N}}
\newcommand{\intdom}{\ensuremath{\text{int\,dom}}}
\newcommand{\dom}{\ensuremath{\text{\rm dom}\,}}
\newcommand{\graph}{\ensuremath{\text{\rm graph}\,}}
\newcommand{\prox}{\ensuremath{\text{\rm prox}}}
\newcommand{\sign}{\ensuremath{\text{\rm sign}}}
\newcommand{\argmind}[2]{\ensuremath{\underset{\substack{{#1}}}%
{\mathrm{argmin}}\;\;#2 }}
\newcommand{\Argmind}[2]{\ensuremath{\underset{\substack{{#1}}}%
{\mathrm{Argmin}}\;\;#2 }}
\newtheorem{theorem}{Theorem}[section]
\newtheorem{lemma}[theorem]{Lemma}
\newtheorem{proposition}[theorem]{Proposition}
\theoremstyle{plain}{\theorembodyfont{\rmfamily}%
}
\theoremstyle{plain}{\theorembodyfont{\rmfamily}%
\newtheorem{assumption}[theorem]{Assumption}}
\theoremstyle{plain}{\theorembodyfont{\rmfamily}%
}
\theoremstyle{plain}{\theorembodyfont{\rmfamily}%
}
\theoremstyle{plain}{\theorembodyfont{\rmfamily}%
}
\theoremstyle{plain}{\theorembodyfont{\rmfamily}%
\newtheorem{remark}[theorem]{Remark}}
\theoremstyle{plain}{\theorembodyfont{\rmfamily}%
\newtheorem{definition}[theorem]{Definition}}
\theoremstyle{plain}{\theorembodyfont{\rmfamily}%
}
\numberwithin{equation}{section}
\begin{document}


\title{\sffamily Variable Metric Forward-Backward Algorithm \\
for Composite Minimization Problems}
\author{
Audrey Repetti$^{\dagger\star}$ and Yves Wiaux$^\star$ 
\\[5mm]
\small
\small $^\dagger$ School of Mathematics and Computer Sciences, Heriot-Watt University, Edinburgh, UK\\
\small $^\star$ School of Engineering and Physical Sciences, Heriot-Watt University, Edinburgh, UK\\
\small \email{\{a.repetti, y.wiaux\}@hw.ac.uk}
}
\date{}

\maketitle
\thispagestyle{empty}

\vskip 8mm

\begin{abstract}
We present a forward-backward-based algorithm to minimize a sum of a differentiable function and a nonsmooth function, both being possibly nonconvex. The main contribution of this work is to consider the challenging case where the nonsmooth function corresponds to a sum of non-convex functions, resulting from composition between a strictly increasing, concave, differentiable function and a convex nonsmooth function. The proposed variable metric Composite Function Forward-Backward algorithm (C2FB) circumvents the explicit, and often challenging, computation of the proximity operator of the composite functions through a majorize-minimize approach. Precisely, each composite function is majorized using a linear approximation of the differentiable function, which allows one to apply the proximity step only to the sum of the nonsmooth functions.  We prove the convergence of the algorithm iterates to a critical point of the objective function leveraging the Kurdyka-\L ojasiewicz inequality.  The convergence is guaranteed even if the proximity operators are computed inexactly, considering relative errors. We show that the proposed approach is a generalization of reweighting methods, with convergence guarantees. In particular, applied to the log-sum function, our algorithm reduces to a generalized version of the celebrated reweighted $\ell_1$ method. Finally, we show through simulations on an image processing problem that the proposed C2FB algorithm necessitates less iterations to converge and leads to better critical points compared with traditional reweighting methods and classic forward-backward algorithms.
\end{abstract}

{\bfseries Keywords.}
Nonconvex optimization, nonsmooth optimization, proximity operator, composite minimization problem, majorize-minimize method, forward-backward algorithm, reweighting algorithm, inverse problems

{\bfseries MSC.}
  	90C26, 
  	90C59, 
  	65K10, 
  	49M27, 
  	68W25, 68U10, 
  	94A08 


\section{Introduction}

In this work we consider optimization problems of the form
\begin{equation}	\label{prob:min}
    \minimize{x \in \RR^N} \Big\{ f(x) = h(x) + g(x) \Big\},
\end{equation}
where $h \colon \RR^N \to \RR$ is a Lipschitz differentiable function with constant $\mu>0$, 
and $g \colon \RR^N \to ]-\infty, +\infty]$ is a sum of composite functions as follows
\begin{equation}    \label{def:crit}
    (\forall x \in \RR^N) \quad
    \displaystyle g(x) = \sum_{p=1}^P  (\phi_p \circ \psi_p) (x).
\end{equation}
Composite optimization problems of this form have been studied extensively during the last decades, notably in, e.g., \cite{Burke85, Cartis2011, Drusvyatskiy2016b, Fletcher2009, Lewis2015, Powell83, Powell84, Wright90, Yuan85}. 
In general, all these works rely on the same strategy, consisting in iteratively minimizing approximations to the objective function $f$.

\subsection{Related work}

A first notable example is when $P=1$, $\phi_1$ is the identity function and $g \equiv \psi_1$ is a proper, lower semi-continuous function whose proximity operator can be computed. In this context, a common approach to solve \eqref{prob:min} is the forward-backward (FB) algorithm \cite{Chen97, combettes2005signal, Lions79, Tseng_P_2000_j-siam-control-optim_Modified_fbs}, which alternates between a gradient step on $h$ and a proximity step on $g$. Precisely, at each iteration $k\in \NN$, given the current iterate $x_k \in \RR^N$, the next iterate is defined as
\begin{equation}    \label{eq:FBprox}
    x_{k+1} = \prox_{\gamma_k g} \big( x_k - \gamma_k \nabla h(x_k) \big),
\end{equation}
where $\prox_{\gamma_k g}$ denotes the proximity operator\footnote{The definition of the proximity operator and all the other mathematical definitions and notation used throughout this paper will be given in section~\ref{Sec:Optim}.} of $\gamma_k g$, and $\gamma_k>0$. The convergence of the iterates $(x_k)_{k\in \NN}$ of the FB algorithm to a minimizer of $f$ has been established when both $h$ and $g$ are convex, and choosing $\gamma_k \in ]0, 2/\mu[$ (see e.g. \cite{Burger2016, combettes2005signal}). This result has been extended in \cite{Attouch_Bolte_2011} to the case when both $h$ and $g$ are nonconvex. Assuming that $\gamma_k \in ]0, 1/\mu[$, the authors have proved the convergence of $(x_k)_{k\in \NN}$ to a critical point of $f$, using the Kurdyka-\L ojasiewicz (KL) inequality \cite{Bolte10, Kurdyka94, Lojasiewicz63}. 
However, as many first-order minimization methods, it may suffer from slow convergence \cite{Chen97}. 
Accelerated versions of the FB algorithm have been proposed in the literature, mainly based either on Nesterov' accelerations \cite{beck2009fast, Liang2019} (also called subspace accelerations, or FISTA) or on preconditioning strategies \cite{Chouzenoux13, Chouzenoux_2016, frankel2015}. While the former uses information from the previous iterates, the second aims to improve the step-size $\gamma_k$ at each iteration by introducing a symmetric positive definite (SPD) matrix $A_k \in \RR^{N \times N}$, leading to the following variable metric forward-backward algorithm (VMFB):
\begin{equation}    \label{eq:VMFBprox}
    x_{k+1} = \prox^{\gamma_k^{-1} A_k}_{g} \big( x_k - \gamma_k A_k^{-1} \nabla h(x_k) \big).
\end{equation}
When $A_k$ is chosen to be equal to the identity matrix $\Id_N$, then the basic FB algorithm is recovered. However, as shown in \cite{Chouzenoux13, Repetti2014_icassp}, wiser choices of $A_k$ can drastically accelerate the convergence of the iterates, and even outperform FISTA. In particular, in \cite{Chouzenoux13}, the authors proposed to choose the preconditioning matrices using a majorize-minimize (MM) approach \cite{HunterLange2004, Mairal2013, Sun2017, Wu1983}. Indeed, one can notice that \eqref{eq:VMFBprox} can be equivalently rewritten as an MM algorithm:   
\begin{multline}    \label{eq:VMFB_MM}
    x_{k+1} = \argmind{x \in \RR^N} \Big\{ f_k(x, x_k) = g(x) + h(x_k) + \scal{\nabla h(x_k)}{x-x_k} \\
    + \frac{1}{2\gamma_k} \scal{ x - x_k}{A_k(x-x_k)}  \Big\}.
\end{multline}
The above equation also shows that the VMFB (and FB) algorithm can be interpreted as the proximal regularization of $h$ linearized at the current iterate $x_k$ \cite{beck2009fast, Bolte14}.
When $A_k = \Id_N$, using the descent lemma, it is straightforward to notice that \linebreak $f_k( \cdot, x_k) \colon \RR^N \to ]-\infty, +\infty]$ is a majorant function of $f$ at $x_k$ in the sense that, for every $x \in \RR^N$, $f(x) \le f_k(x, x_k)$ and $f(x_k) = f_k(x_k, x_k)$. In \cite{Chouzenoux13}, the authors proposed to choose $A_k$ to define a more accurate majorant function at each iteration, and proved the convergence of sequences $(x_k)_{k\in \NN}$ generated by \eqref{eq:VMFBprox} to a critical point of $f$ using the KL inequality (when $g$ is assumed to be convex).
However, to the best of our knowledge, there exists no version of the FB algorithm with convergence guarantees able to solve the general composite problem \eqref{prob:min}-\eqref{def:crit} for more general choices of $\phi$, in particular when the proximity operator of $g$ cannot be computed.

During the last years, many optimization methods, mainly based on MM strategies, arose to solve the general composite problem in the case where $P=1$, $\phi_1\equiv \phi$, and $\psi_1\equiv \psi$, i.e. 
\begin{equation}	\label{prob:def_g_gen}
(\forall x \in \RR^N)\quad
g(x) =  (\phi \circ \psi)(x).
\end{equation}
In particular, we can distinguish two main approaches.
The first one consists in majorizing the outer function $\phi$ in \eqref{prob:def_g_gen}. This approach has been investigated in  \cite{Geiping2018, Ochs2015siam}. Precisely, in \cite{Ochs2015siam}, the authors assume that $\psi = (\psi_1, \ldots, \psi_J)$ where $J\le N$ and, for every $j\in \{1, \ldots, J\}$, $\psi_j \colon \RR^N \to \RR$ is convex, and $\phi \colon \RR^P \to \RR$ is coordinate-wise non-decreasing. They also do not require $h$ to be differentiable, but to be proper, lower semi-continuous and convex. In this context, the authors propose to solve \eqref{prob:min}-\eqref{prob:def_g_gen} by defining 
\begin{equation}    \label{ochs2015:algo}
    (\forall k \in \NN)\quad
    x_{k+1} = \argmind{x \in \RR^N} h(x) + q\big( \psi(x), x_k \big),
\end{equation}
where $q\colon \RR^J \to \RR$ is a convex, proper, component-wise non-decreasing majorant function of $\phi$ at $\psi(x_k)$. Using the KL inequality, the authors show that $(x_k)_{k \in \NN}$ converges to a critical point of $f$. However, problem~\eqref{ochs2015:algo} needs to be solved exactly at each iteration, which may not be possible in practice. 
Note that convergence of general MM algorithms in a non-convex setting (not necessarily for composite functions) has also been investigated in \cite{Bolte16}, but it necessitates as well problem~\eqref{ochs2015:algo} to be solved exactly at each iteration to ensure the convergence of $(x_k)_{k \in \NN}$.
In \cite{Geiping2018}, the authors propose a similar approach, using the same update as in \eqref{ochs2015:algo}, but under different assumptions, and with a particular form for the majorant function $q(\cdot, x_k)$. Precisely, for every $k\in \NN$, the authors propose to choose the majorant function as follows
\begin{equation}    \label{geiping:maj}
    (\forall u \in \RR^J) \quad
    q\big( u, x_k \big) = \phi\big(\psi(x_k)\big) + \scal{\nabla \phi \big(\psi(x_k)\big)}{u - \psi(x_k)} + \frac{1}{\gamma} D_d \big( u, \psi(x_k)\big),
\end{equation}
where $\gamma>0$ and $D_d$ is the Bregman distance relative to a Legendre function $d$ \cite{Rockafellar70}. In particular, when $d$ is the usual Euclidean norm squared, then \eqref{geiping:maj} corresponds to a quadratic majorant of $\phi$ at $\psi(x_k)$. In this work, the authors do not assume $h$ to be differentiable, but only proper and lower semi-continuous. The function $\psi$ is assumed to be continuously differentiable, and $\phi$ to be differentiable such that $a d-\phi$ is convex for some constant $a>0$, and $\gamma^{-1} \nabla d - \nabla \phi$ is locally Lipschitz continuous on $\intdom \, d$ (for $d$ strongly convex given by the Bregman distance in \eqref{geiping:maj}). Under these technical assumptions and using the KL inequality, the convergence of $(x_k)_{k\in \NN}$ is then guaranteed. However, both functions $\phi$ and $\psi$ necessitate to be differentiable, and problem~\eqref{ochs2015:algo} must be solved accurately at each iteration (often using sub-iterations).
The second approach to solve the full composite problem~\eqref{prob:min}-\eqref{prob:def_g_gen} consists in using Taylor-like models, and has been investigated in \cite{Drusvyatskiy2016, Ochs2018JOTA}. In both the works, the authors propose to investigate a general problem, consisting in minimizing the function $f$ on $\RR^N$. 
In particular, in \cite{Drusvyatskiy2016}, the authors propose to define the next iterate as 
\begin{equation}    \label{Drus:algo}
    (\forall k \in \NN)\quad
    x_{k+1} = \argmind{x \in \RR^N} f_{x_k}(x),
\end{equation}
where $f_{x_k} \colon \RR^N \to ]-\infty, +\infty]$ is a model function for $f$ at $x_k$, in the sense that there exists a growth function $w \colon [0,+\infty[ \to [0,+\infty[$ such that, for every $x \in \RR^N$, $| f(x) - f_{x_k}(x) | \le w( |x -x_k|)$.
In \cite{Ochs2018JOTA}, the authors propose a modified version of~\eqref{Drus:algo}, adding a Bregman distance $D_{d_k}$ to $f_{x_k}$ that changes at each iteration.
In both the cases, sub-iterations are needed at each iteration. In addition, both the two works have similar convergence guarantees, obtained without the need of the KL inequality. It is interesting to note that, in \cite{Ochs2018JOTA}, the authors pointed out that, at each iteration, the function $f_{x_k}+D_{d_k}$ must be solved accurately to reach asymptotic convergence.

It is worth mentioning that other types of methods have been proposed to minimise composite functions. In particular one can mention the Gauss-Newton methods \cite{Ortega_book_1970}. These methods have been originally proposed to minimize functionals of the particular fom of $\| \psi(\cdot) \|^2$, by solving iteratively approximated problems where $\psi$ is linearized. This approach can also be used to minimize the sum of $\| \psi(\cdot) \|^2$ and an additional term, however traditional Gauss-Newton methods may be unstable when the additional term is non-smooth \cite{Valkonen14}. To circumvent this issue, a Gaussian-Newton method with proximal linearization have been proposed in \cite{Jauhiainen2020}. Nevertheless this approach cannot handle more general composite functions than choosing $\phi = \| \cdot \|^2$. Note that Gauss-Newton methods have also been used to minimize more general composite functions $\phi \circ \psi$ (see e.g. \cite{Burke95}) for $\phi$ convex and $\psi$ continuously differentiable, but without additional term. 




In the current work, we propose a novel algorithm merging the structure of the VMFB algorithm with the MM strategy consisting in approximating the composite function of interest.

\subsection{Proposed approach}

In this work, we will aim to solve problem~\eqref{prob:min}-\eqref{def:crit} where, 
for every $p\in \{1, \ldots,P\}$, $\psi_p \colon \RR^N \to [0,+\infty]$ is convex, proper, lower semi-continuous and Lipschitz continuous on its domain, and $\phi_p \colon [0,+\infty] \to ]-\infty, +\infty]$ is a concave, strictly increasing and differentiable function, such that 
$(\phi_p' \circ \psi_p)$ is Lipschitz-continuous on the domain of $\psi_p$, where $\phi_p'$ denotes the first derivative of $\phi_p$. 

To solve problem~\eqref{prob:min}, we propose to use a VMFB-based algorithm. Basically, as in \eqref{eq:VMFBprox}, the Lipschitz differentiable function $h$ is handled through a gradient step, while the non-smooth term $g$ is handled using a proximal step. However, due to the composite form of $g$ in \eqref{def:crit}, its the proximity operator might not be computable,~either efficiently, or at all. To overcome this difficulty, we propose to replace at each iteration $k\in \NN$, the function $g$ by an approximation denoted by $q( \cdot, x_k) \colon \RR^N \to ]-\infty, +\infty]$. Precisely, this approximation is chosen to be a majorant function of $g$ at $x_k$:
\begin{equation}
\displaystyle (\forall x \in \RR^N) \quad
\begin{cases}
\displaystyle g(x)  \le q(x,x_k) = \sum_{p=1}^P q_p(x,x_k),	\\
g(x_k) = q(x_k, x_k).
\end{cases} 		\label{eq:maj_phi3}
\end{equation}
such that, for every $p \in \{1, \ldots, P\}$, $q_p (\cdot , x_k) \colon \RR^N \to ]-\infty, +\infty]$ is a majorant function of $ (\phi_p \circ \psi_p)$ at $x_k$, i.e.
\begin{equation}
(\forall x \in \RR^N) \quad
\begin{cases}
  (\phi_p \circ \psi_p) (x) \le q_p(x,x_k),	\\
  (\phi_p \circ \psi_p) (x_k) = q_p(x_k,x_k),
\end{cases} 		\label{eq:maj_phi2}
\end{equation}
and is obtained by taking, for every $p\in \{1 ,\ldots, P\}$, the tangent of the concave differentiable function $ \phi_p$ at $\psi_p(x_k)$:
\begin{equation}
(\forall x \in \RR^N) \quad
q_p(x,x_k) 
= 	 (\phi_p \circ \psi_p)(x_k) 
		+  (\phi_p' \circ \psi_p)(x_k) \big( \psi_p(x) - \psi_p(x_k) \big)	.	\label{eq:maj_phi1}	
\end{equation}
%
Then, at each iteration $k\in \NN$, the proposed VMFB algorithm with approximated proximal step reads
\begin{equation}	\label{algo:simple-C2FB-1loop}
    x_{k+1} = \prox_{q(\cdot, x_k)}^{\gamma_{k}^{-1} A_{k}} \Big( {x}_{k} - \gamma_{k} A_{k}^{-1} \nabla h({x}_{k}) \Big),
\end{equation}
where $\gamma_k>0$, and $A_k \in \RR^{N \times N}$ is an SPD matrix. 
As recalled in \eqref{eq:VMFB_MM}, the classic FB (and VMFB) algorithm to solve~\eqref{prob:min} can be seen as the proximal regularization of $h$ linearized at the current iterate $x_k$. 
Interestingly, in the proposed algorithm~\eqref{algo:simple-C2FB-1loop}, the second function $g$ is also (partially) linearized through the linearization of the functions $\phi_p$.

To avoid computing the approximated function $q(.,x_k)$ at each iteration, we propose to fix it for a given finite number of iterations. Precisely, at each iteration $k \in \NN$, the majorant function $q(.,x_k)$ is computed using the current iterate $x_k$, and kept fixed for $I_k \in \NN^*$ VMFB iterations, applied to $h+q(.,x_k)$. Then, the proposed variable metric composite function forward-backward (C2FB) algorithm to solve problem~\eqref{prob:min}-\eqref{def:crit} is given by
\begin{equation}	\label{algo:FB_MM}
\begin{array}{l}
x_0 \in \dom g,	\\
\text{for } k = 0,1, \ldots	\\
\left\lfloor
\begin{array}{l}	
\displaystyle \widetilde{x}_{k,0} = x_k,	\\
\text{for } i = 0, \ldots, I_k-1	\\
\left\lfloor
\begin{array}{l}
\displaystyle 
	\widetilde{x}_{k,i+1} 
		= \prox_{q(\cdot, x_k)}^{\gamma_{k,i}^{-1} A_{k,i}} \Big( \widetilde{x}_{k,i} - \gamma_{k,i} A_{k,i}^{-1} \nabla h(\widetilde{x}_{k,i}) \Big),
\end{array}
\right.	\\[0.1cm]
\displaystyle x_{k+1} = \widetilde{x}_{k,I_k},
\end{array}
\right.
\end{array}
\end{equation}
where, for every $k\in \NN$, and every $i \in \{0, \ldots, I_k-1\}$, $\gamma_{k,i}>0$, and $A_{k,i} \in \RR^{N \times N}$ is an SPD matrix. 
One can notice that in the case when $I_k \to \infty$ for every $k\in \NN$, algorithm~\eqref{algo:FB_MM} can be interpreted as an MM algorithm of the same flavour as those proposed in \cite{Drusvyatskiy2016, Geiping2018, Ochs2015siam, Ochs2018JOTA}. Indeed, in this case, at each iteration $k\in \NN$, we have
\begin{equation}    \label{algo:FB_MM_approx}
    x_{k+1} \approx \argmind{x \in \RR^N} h(x) + q(x, x_k).
\end{equation}

In this work, we prove the convergence of sequences $(x_k)_{k\in \NN}$ generated by the C2FB algorithm given in~\eqref{algo:FB_MM} to a critical point of $f$, using the KL inequality. Subsequently, according to the remark above, we show that algorithm~\eqref{algo:FB_MM_approx} converges to a critical point of $f$, if each sub-problem is solved using VMFB iterates, independently of the number of iterations towards solving each sub-problem.

The remainder of the paper is organized as follows. 
We introduce our notation and give useful definitions of non-convex optimization in \cref{Sec:Optim}. The proposed C2FB method, including an inexact version allowing the proximity operator to be computed inexactly, are given in \cref{Sec:method}. In \cref{Sec:cvgce}, we investigate the asymptotic behaviour of the proposed method, and we give the main convergence result of this work. Particular cases of the proposed approach, including reweighting algorithms, are described in \cref{Sec:examples}. Finally, simulation results on a small image restoration problem are provided in \cref{Sec:Simuls}.

\section{Optimization background}
\label{Sec:Optim}

In this section we give the definitions and notation used throughout the paper. For additional definitions on non-convex optimization, we refer the reader, e.g., to \cite{Rockafellar98}.

\subsection{Analysis notation}

\begin{definition}
Let $f \colon \RR^N \to ]-\infty, +\infty]$. 
\begin{enumerate}
\item
The level set of $f$ at height $\delta \in \RR$ is defined as $\text{lev}_{\le \delta} f = \{ x \in \RR^N | f(x) \le \delta \}$.
\item
The domain of $f$ is defined as $\dom f = \{ x \in \RR^N | f(x)<+\infty \}$.
\item
The function $f$ is proper if its domain is non-empty.
\end{enumerate}
\end{definition}

\begin{definition}
Let $C$ be a subset of $\RR^N$ and $\overline{x} \in \RR^N$. The distance from $\overline{x}$ to $C$ is defined by $\operatorname{dist}(\overline{x}, C) = \inf_{x \in \RR^N} \| \overline{x} - x \|$.
If $C = \emp$, then $\operatorname{dist}(\overline{x},C) = \infty$.
\end{definition}

\begin{definition}
Let $A_1 \in \RR^{N \times N}$ and $A_2 \in \RR^{N \times N}$ be two SPD matrices. 
We denote by $A_1 \succcurlyeq A_2 $ the Loewner partial ordering on $\RR^{N \times N}$, defined as, for every $x \in \RR^N$, $x^\top A_1 x \geqslant x^\top A_2 x$.
The weighted norm associated with $A_1$ is defined as, for every $x \in \RR^N$, $\| x \|_{A_1} = ( x^\top A_1 x)^{1/2}$.
\end{definition}

\subsection{Proximity operator}

The proximity operator, relative to a metric, is defined as follows (see e.g. \cite[Sec.~XV.4]{Urruty_Conv_an} and \cite{Attouch_Bolte_2011, Chouzenoux13, Combettes_Vu_2014}).
\begin{definition}	\label{def:prox}
Let $f \colon \RR^N \to ]-\infty, + \infty]$ be a proper, lower-semicontinuous function. 
Let $A \in \RR^{N \times N}$ be an SPD matrix, and let $\overline{x} \in \RR^N$. 
The proximity operator $\prox_{f}^A \colon \RR^N \rightrightarrows \RR^N$ at $\overline{x}$ of $f$ relative to the metric induced by $A$ is given by $\prox_f^A(\overline{x}) = \Argmind{x \in \RR^N} f(x) + \frac12 \|x - \overline{x} \|^2_A$.
\end{definition}

\begin{remark}\ 
\begin{enumerate}
    \item 
    In the definition of the proximity operator, since $\| \cdot \|_A^2$ is coercive and $f$ is proper and lower-semicontinuous, if $f$ is bounded from below by an affine function, then, for every $\overline{x} \in \RR^N$, $\prox_f^A(x)$ is a non-empty set.
    \item 
    If $f$ is convex, for every $\overline{x} \in \RR^N$, $\prox_f^A(\overline{x})$ is unique. 
    In addition, if $A =\Id_N$, then $\prox_f^{\Id_N} \equiv \prox_f$ is the proximity operator originally defined in \cite{moreau1965proximite}.
\end{enumerate}

\end{remark}

\subsection{Sub-gradients}

\begin{definition}	\label{Def:subdiff}
Let $f \colon \RR^N \to ]-\infty, + \infty]$ and $\overline{x} \in \RR^N$. 
The Fr\'echet sub-differential of $f$ at $\overline{x}$ is denoted by $ \widehat{\partial} f(\overline{x})$, and is given by
\begin{equation}	\label{def:subdiff:frechet}
\widehat{\partial} f(\overline{x}) = \Big\{ \widehat{v}(\overline{x}) \in \RR^N | \underset{y \to \overline{x}, y \neq \overline{x}}{\lim \inf} \dfrac{1}{\|\overline{x}-y\|} \big( f(y) - f(\overline{x}) - \scal{y-\overline{x}}{ \widehat{v}(\overline{x})} \big) \ge 0 \Big\}.
\end{equation}
If $ \overline{x} \not\in \dom f$, then $\widehat{\partial}f(\overline{x}) = \emp$.

\noindent
The limiting sub-differential of $f$ at $\overline{x}$ is denoted by $ \partial f(\overline{x})$, and is given by
\begin{multline}
\partial f(\overline{x}) 
	= \Big\{ v(\overline{x}) \in \RR^N | \exists \big( x_k, \widehat{v}(x_k) \big) \to \big( \overline{x}, v(\overline{x}) \big) \\
	\text{ such that } f(x_k) \to f(\overline{x}) \text{ and } (\forall k \in \NN) \; \widehat{v}(x_k) \in \widehat{\partial}f(x_k) \Big\}.
\end{multline}
\end{definition}

\begin{remark}\ \label{Def:subdiff:Fr2}
\begin{enumerate}
\item		\label{Def:subdiff:Fr2:i}
An equivalent definition of \eqref{def:subdiff:frechet} is given by \cite[Def. 8.3]{Rockafellar98}: $\widehat{\partial} f(\overline{x}) = 
\Big\{ \widehat{v}(\overline{x}) \in \RR^N | 
(\forall x \in \RR^N) \; f(x) \ge f(\overline{x}) + \scal{\widehat{v}(\overline{x})}{x - \overline{x}} + o(|x-\overline{x}|)
\Big\}$

\item		\label{Def:subdiff:Fr2:ii}
A necessary condition for $x^\star \in \RR^N$ to be a minimizer of $f$ is that $x^\star$ is a critical point of $f$, i.e. $0 \in \partial f(x^\star)$. If $f$ is convex, this condition is also sufficient.

\end{enumerate}
\end{remark}

The next results are basic chain rules that will be used throughout the paper.

\begin{proposition}[Basic chain rules \cite{Rockafellar98}]	\label{Prop:chain-rules}
\begin{enumerate}
\item \label{Prop:chain-rules:i}
Let $h \colon \RR^N \to \RR$ be a differentiable function and $g \colon \RR^N \to ]-\infty, + \infty]$, then we have $\partial (h+g) = \nabla h + \partial g$.

\item \label{Prop:chain-rules:ii}
Let, for every $p\in \{1, \ldots, P\}$, $\psi_p \colon \RR^N \to ]-\infty, +\infty]$ be a convex, proper and lower-semicontinuous function. Then we have $\partial \Big( \sum_{p=1}^P \psi_p \Big) = \sum_{p=1}^P \partial \psi_p$.
\end{enumerate}
\end{proposition}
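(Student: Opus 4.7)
The plan is to treat the two parts separately, as each is essentially a calculus identity for the limiting subdifferential whose proof follows the standard Rockafellar--Wets recipe: first establish the analogue for the Fr\'echet subdifferential, then pass to the limit.

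For (i), I would start from the variational characterization in Remark 2.3(\ref{Def:subdiff:Fr2:i}): $\widehat{v}(\overline{x}) \in \widehat{\partial}(h+g)(\overline{x})$ is equivalent to $(h+g)(x) \ge (h+g)(\overline{x}) + \scal{\widehat{v}(\overline{x})}{x-\overline{x}} + o(|x-\overline{x}|)$ for every $x \in \RR^N$. Since $h$ is differentiable at $\overline{x}$, substituting $h(x) - h(\overline{x}) = \scal{\nabla h(\overline{x})}{x-\overline{x}} + o(|x-\overline{x}|)$ and absorbing it into the error term isolates $g$, which yields the Fr\'echet identity $\widehat{\partial}(h+g)(\overline{x}) = \nabla h(\overline{x}) + \widehat{\partial} g(\overline{x})$. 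To lift this to the limiting subdifferential, take $v(\overline{x}) \in \partial(h+g)(\overline{x})$ with a witnessing sequence $(x_k, \widehat{v}(x_k))$; the Fr\'echet identity at $x_k$ gives $\widehat{v}(x_k) - \nabla h(x_k) \in \widehat{\partial} g(x_k)$, and continuity of both $h$ and $\nabla h$ (guaranteed by the Lipschitz differentiability standing assumption on $h$) then yields $g(x_k) = (h+g)(x_k) - h(x_k) \to g(\overline{x})$ and $\widehat{v}(x_k) - \nabla h(x_k) \to v(\overline{x}) - \nabla h(\overline{x})$, so $v(\overline{x}) - \nabla h(\overline{x}) \in \partial g(\overline{x})$. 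The reverse inclusion is obtained by the same approximation argument applied to a witnessing sequence for $\partial g(\overline{x})$, exploiting the Fr\'echet identity in the other direction.

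For (ii), I would use the fact that, since each $\psi_p$ is convex, proper and lower-semicontinuous, its limiting subdifferential coincides with the classical convex subdifferential; the identity then reduces to the Moreau--Rockafellar sum rule. The forward inclusion $\sum_p \partial \psi_p(\overline{x}) \subseteq \partial\bigl(\sum_p \psi_p\bigr)(\overline{x})$ is immediate from summing the subgradient inequalities over $p$. The main obstacle is the reverse inclusion: it is not automatic and typically requires a qualification condition such as $\bigcap_{p=1}^{P} \reli (\dom \psi_p) \neq \emp$, with the proof proceeding through an epigraphical separation argument in a suitable product space. Since this is the classical convex sum rule, I would simply invoke it by citation to~\cite{Rockafellar98} rather than reproduce the separation argument in detail.
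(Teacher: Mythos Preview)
The paper provides no proof at all for this proposition: it is stated with a bare citation to \cite{Rockafellar98} and nothing more. Your proposal therefore goes well beyond what the paper does, by actually sketching the standard Rockafellar--Wets arguments. Your treatment of (i) is correct and is precisely the textbook route (Fr\'echet identity via first-order expansion of $h$, then pass to limits using continuity of $\nabla h$). For (ii) you rightly flag that the reverse inclusion in the Moreau--Rockafellar sum rule is not automatic and in general requires a qualification condition such as $\bigcap_{p} \reli(\dom\psi_p)\neq\emp$; the paper's statement suppresses this hypothesis, so your decision to invoke the convex sum rule by citation, together with the caveat, is the honest way to handle it.
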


The following closedness property (see e.g \cite[Thm. 8.6]{Rockafellar98}) will be useful to establish the convergence of the proposed C2FB algorithm to a critical point of $f$. This property is often used to investigate convergence of sequences in a nonconvex setting (see e.g. \cite{Attouch_Bolte_2011, Bolte18, Bonettini17, Ochs2019}).
\begin{theorem}	\label{Thm:Rock:contsub}
Let $\big( x_k, t(x_k) \big)_{k \in \NN}$ be a sequence belonging to $\graph \partial f$. If \linebreak $\big( x_k, t(x_k) \big)_{k\in \NN}$ converges to $\big( x^\star, t^\star \big)$, and $(f(x_k))_{k\in \NN}$ converges to $f(x^\star)$, then \linebreak$\big( x^\star, t^\star \big) \in \graph \partial f$.
\end{theorem}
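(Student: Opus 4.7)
The plan is to invoke the very definition of the limiting sub-differential, which is constructed precisely so as to yield this closedness property ``for free'', via a diagonal extraction argument. The key observation is that membership $t^\star \in \partial f(x^\star)$ requires producing a \emph{single} sequence $(y_n, \widehat v(y_n))$ with $y_n \to x^\star$, $\widehat v(y_n) \in \widehat{\partial} f(y_n)$, $\widehat v(y_n) \to t^\star$, and $f(y_n) \to f(x^\star)$. The hypothesis only gives us limiting sub-gradients $t(x_k) \in \partial f(x_k)$, so for each fixed $k$ we must first ``descend'' to the Fr\'echet level to obtain such an approach sequence, and then diagonalize across $k$.

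Concretely, I would proceed as follows. Fix $k \in \NN$. Since $t(x_k) \in \partial f(x_k)$, \cref{Def:subdiff} yields sequences $(y_{k,n})_{n \in \NN}$ in $\RR^N$ and $(\widehat v_{k,n})_{n \in \NN}$ in $\RR^N$ satisfying, as $n \to \infty$,
\begin{equation*}
y_{k,n} \to x_k, \qquad \widehat v_{k,n} \to t(x_k), \qquad f(y_{k,n}) \to f(x_k),
\end{equation*}
with $\widehat v_{k,n} \in \widehat{\partial} f(y_{k,n})$ for every $n \in \NN$. For each $k$, pick an index $n_k \in \NN$ large enough so that
\begin{equation*}
\| y_{k,n_k} - x_k \| \le 1/k, \qquad \| \widehat v_{k,n_k} - t(x_k) \| \le 1/k, \qquad | f(y_{k,n_k}) - f(x_k) | \le 1/k,
\end{equation*}
and set $\widetilde y_k = y_{k,n_k}$, $\widetilde v_k = \widehat v_{k,n_k}$, so that $\widetilde v_k \in \widehat{\partial} f(\widetilde y_k)$ by construction.

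Combining these estimates with the standing hypotheses $x_k \to x^\star$, $t(x_k) \to t^\star$, and $f(x_k) \to f(x^\star)$, the triangle inequality gives $\widetilde y_k \to x^\star$, $\widetilde v_k \to t^\star$, and $f(\widetilde y_k) \to f(x^\star)$. Hence $(\widetilde y_k, \widetilde v_k)_{k\in\NN}$ is a sequence in $\graph \widehat\partial f$ witnessing $t^\star \in \partial f(x^\star)$ according to \cref{Def:subdiff}, which is precisely the desired conclusion $(x^\star, t^\star) \in \graph \partial f$.

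The only delicate step is the diagonal selection itself, which is completely routine once one realizes that the three conditions defining limiting sub-gradients can all be tracked by a single $1/k$-budget per slice. Nothing more than the Fr\'echet-to-limiting construction is required, which is also why the result is stated as a direct invocation of \cite[Thm.~8.6]{Rockafellar98} in the paper.
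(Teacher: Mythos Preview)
Your argument is correct and is exactly the standard diagonal extraction that underlies the closedness of the limiting sub-differential; the only cosmetic blemish is that $1/k$ is undefined at $k=0$, which you can fix by using $1/(k+1)$ or starting the indexing at $k=1$. Note that the paper does not supply its own proof of this statement at all---it is quoted verbatim as \cite[Thm.~8.6]{Rockafellar98} and used as a black box---so there is nothing further to compare against.
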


The following proposition will be used to link the sub-gradients of the objective function and its majorant. This result is similar to the one presented in \cite[Lemma~1]{Ochs2015siam}, with slightly different conditions (in particular, we do not assume that the outer function $\phi$ has locally Lipschitz continuous gradient). To the best of our knowledge, this sub-gradient calculation rule has not been presented elsewhere.
\begin{proposition}	\label{Lem:subdiff}
Let $\psi \colon \RR^N \to [0,+\infty]$ be a proper function which is continuous on its domain, and let $\phi \colon [0,+\infty] \to ]-\infty, +\infty]$ be a concave, strictly increasing and differentiable function. 
We further assume that $(\phi' \circ \psi)$ is continuous on its domain. 
Then, for every $\overline{x} \in \RR^N$, we have $\partial (\phi \circ \psi) ( \overline{x}) = (\phi' \circ \psi)(\overline{x}) \partial \psi(\overline{x})$.
\end{proposition}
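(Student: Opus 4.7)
The plan is to establish the equality first at the level of the Fr\'echet subdifferential at every point of $\dom \psi$, and then lift it to the limiting subdifferential via the sequential definition in \cref{Def:subdiff}. As a preliminary observation, since $\phi$ is concave, strictly increasing and differentiable, $\phi'$ is continuous (a concave function which is differentiable on an interval is $C^1$ there) and strictly positive on $[0,+\infty]$ (a zero of $\phi'$ combined with non-increase would freeze $\phi$ on an interval, contradicting strict monotonicity). In particular $(\phi'\circ\psi)(\overline{x})$ is a positive scalar, so the identity in the statement is a nonzero rescaling.

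For the inclusion $(\phi'\circ\psi)(\overline{x})\widehat{\partial}\psi(\overline{x}) \subseteq \widehat{\partial}(\phi\circ\psi)(\overline{x})$ I pick $\widehat{u}\in\widehat{\partial}\psi(\overline{x})$, use the concave lower bound
\begin{equation*}
\phi(\psi(x)) - \phi(\psi(\overline{x})) \ge \phi'(\psi(x))(\psi(x) - \psi(\overline{x})),
\end{equation*}
and then exploit the algebraic rearrangement
\begin{multline*}
(\phi\circ\psi)(x) - (\phi\circ\psi)(\overline{x}) - \phi'(\psi(\overline{x}))\scal{\widehat{u}}{x-\overline{x}} \\
\ge \phi'(\psi(x))\big[\psi(x)-\psi(\overline{x})-\scal{\widehat{u}}{x-\overline{x}}\big] + \big[\phi'(\psi(x))-\phi'(\psi(\overline{x}))\big]\scal{\widehat{u}}{x-\overline{x}}.
\end{multline*}
After dividing by $\|x-\overline{x}\|$ and taking $\liminf_{x\to\overline{x}}$, the first summand is nonnegative (continuity of $\phi'\circ\psi$ at $\overline{x}$ combined with the Fr\'echet subgradient inequality for $\psi$) and the second tends to zero ($\phi'(\psi(x))-\phi'(\psi(\overline{x}))\to 0$ while the quotient $|\scal{\widehat{u}}{x-\overline{x}}|/\|x-\overline{x}\|\le\|\widehat{u}\|$ stays bounded). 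The point of the rearrangement is that it never divides the potentially uncontrolled quantity $\psi(x)-\psi(\overline{x})$ by $\|x-\overline{x}\|$, which is precisely where the absence of any Lipschitz assumption on $\psi$ would have caused trouble.

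The reverse Fr\'echet inclusion is much shorter: for $\widehat{v}\in\widehat{\partial}(\phi\circ\psi)(\overline{x})$, the concave upper bound $\phi(\psi(x))-\phi(\psi(\overline{x}))\le\phi'(\psi(\overline{x}))(\psi(x)-\psi(\overline{x}))$ chained with the Fr\'echet inequality for $\phi\circ\psi$ and division by $\phi'(\psi(\overline{x}))>0$ directly shows $\widehat{v}/\phi'(\psi(\overline{x}))\in\widehat{\partial}\psi(\overline{x})$.

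Lifting to the limiting subdifferential is then routine. Given $u\in\partial\psi(\overline{x})$, choose $(x_k,\widehat{u}_k)\to(\overline{x},u)$ with $\widehat{u}_k\in\widehat{\partial}\psi(x_k)$ and $\psi(x_k)\to\psi(\overline{x})$; the Fr\'echet chain rule at each $x_k$ yields $\phi'(\psi(x_k))\widehat{u}_k\in\widehat{\partial}(\phi\circ\psi)(x_k)$, and continuity of $\phi'\circ\psi$ on $\dom\psi$ together with continuity of $\phi$ at $\psi(\overline{x})$ allows passage to the limit, producing the $\supseteq$ inclusion. Conversely, for $v\in\partial(\phi\circ\psi)(\overline{x})$ with approximating sequences $(x_k,\widehat{v}_k)$ satisfying $(\phi\circ\psi)(x_k)\to(\phi\circ\psi)(\overline{x})$, the fact that $\phi$ is a continuous strictly increasing map, hence a homeomorphism onto its range, forces $\psi(x_k)\to\psi(\overline{x})$; then $\phi'(\psi(x_k))\to\phi'(\psi(\overline{x}))>0$ and $\widehat{v}_k/\phi'(\psi(x_k))\to v/\phi'(\psi(\overline{x}))$ produces the required limiting subgradient of $\psi$ at $\overline{x}$. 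The single nontrivial obstacle in the whole argument is the Fr\'echet $\supseteq$ direction, which is disposed of once and for all by the rearrangement above.
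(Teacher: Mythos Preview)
Your proof is correct and follows the same two-inclusion architecture as the paper, but the arguments diverge at the key step, the Fr\'echet-level inclusion $(\phi'\circ\psi)(\overline{x})\,\widehat{\partial}\psi(\overline{x}) \subseteq \widehat{\partial}(\phi\circ\psi)(\overline{x})$. You handle it by the direct algebraic rearrangement and a $\liminf$ computation, which is self-contained and makes explicit where continuity of $\phi'\circ\psi$ is consumed. The paper instead invokes the smooth-minorant characterization of Fr\'echet subgradients \cite[Prop.~8.5]{Rockafellar98}: given $r\in\widehat{\partial}\psi(x_k)$, there is a differentiable $\rho_k$ with $\rho_k\le\psi$ near $x_k$, equality and matching gradient at $x_k$; composing with the strictly increasing differentiable $\phi$ produces a differentiable minorant $\phi\circ\rho_k$ of $\phi\circ\psi$ with gradient $(\phi'\circ\psi)(x_k)\,r$ at $x_k$, and the same proposition read backwards gives the desired Fr\'echet subgradient of $\phi\circ\psi$. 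The paper's route is shorter once Prop.~8.5 is on the table and sidesteps any $\liminf$-of-a-product issue; yours is more elementary and needs no external lemma. One point in your argument that deserves a half-line of justification: the $\liminf$ of the first summand is nonnegative because the factor $[\psi(x)-\psi(\overline{x})-\scal{\widehat{u}}{x-\overline{x}}]/\|x-\overline{x}\|$ is bounded below near $\overline{x}$ (this is what $\liminf\ge 0$ means), while $\phi'(\psi(x))$ tends to a positive limit, so the product is bounded below by an arbitrarily small negative number. The remaining Fr\'echet inclusion and both limiting passages are identical in spirit to the paper's.
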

\begin{proof}
Firstly, let us show the first inclusion, i.e., $\partial (\phi \circ \psi)( \overline{x}) \subset (\phi' \circ \psi)(\overline{x}) \partial \psi(\overline{x})$, for $\overline{x}\in \RR^N$. 
Let $v(\overline{x}) \in \partial \big(\phi \circ \psi\big)(\overline{x})$. 
According to Definition~\ref{Def:subdiff}, there exists $\big( x_k,v(x_k) \big)_{k\in \NN}$ converging to $\big( \overline{x},v(\overline{x}) \big)$, such that $(\phi \circ \psi) (x_k) \to (\phi \circ \psi)(\overline{x})$ and, for every $k\in \NN$, $v(x_k) \in \widehat{\partial} \big(\phi \circ \psi\big) (x_k)$. Thus, according to Remark~\ref{Def:subdiff:Fr2}\ref{Def:subdiff:Fr2:i}, for every $x \in \RR^N$, we have
\begin{equation}	\label{pr:Lem:subdiff:i}
(\phi \circ \psi)(x) \ge (\phi \circ \psi)(x_k) + \scal{v(x_k)}{x-x_k} + o(|x-x_k|).
\end{equation}
Since $\phi$ is a concave and differentiable function, we have, for every $(u_1,u_2) \in [0,+\infty[^2$, $\phi(u_1) - \phi(u_2) \le \phi'(u_2) (u_1 - u_2)$.
Let $u_1 = \psi(x)$ and $u_2 = \psi(x_k)$, then
\begin{equation}
(\phi \circ \psi)(x) - (\phi \circ \psi)(x_k) \le (\phi' \circ \psi)(x_k) (\psi(x) - \psi(x_k)).
\end{equation}
Combining the last inequality with \eqref{pr:Lem:subdiff:i} leads to
\begin{equation}
(\phi' \circ \psi)(x_k) (\psi(x) - \psi(x_k)) \ge \scal{v(x_k)}{x-x_k} + o(|x-x_k|).
\end{equation}
Since $\phi$ is a strictly increasing function, for every $u \in [0,+\infty]$, $\phi'(u) > 0$. Then $(\phi' \circ \psi)(x_k) \neq 0 $, and the last inequality is equivalent to
\begin{equation}
\psi(x) \ge \psi(x_k) + \scal{\big( (\phi' \circ \psi)(x_k)\big)^{-1} v(x_k)}{x-x_k} + o(|x-x_k|).
\end{equation}
Then, by definition of $\widehat{\partial} \psi (x_k)$, we have $\big( (\phi' \circ \psi)(x_k)\big)^{-1} v(x_k) \in \widehat{\partial} \psi (x_k)$.
In addition, since $x_k \to \overline{x}$ and $\psi$ is a continuous function, we have $\psi(x_k) \to \psi(\overline{x})$.
Finally, since $v(x_k) \to v(\overline{x}) \in \RR^N$, $(\phi'  \circ \psi)$ is continuous on $\dom (\phi' \circ \psi)$, and $(\phi' \circ \psi) (x_k)\in ]0,+\infty[$, we have $\big( (\phi' \circ \psi) (x_k) \big)^{-1} v(x_k) \to \big( (\phi' \circ \psi) (\overline{x}) \big)^{-1} v(\overline{x})$.
Therefore, using 
Definition~\ref{Def:subdiff}, we can conclude that 
$\big( (\phi' \circ \psi(\overline{x}) \big)^{-1} v(\overline{x}) \in \partial \psi(\overline{x})$, i.e. $v(\overline{x}) \in (\phi' \circ \psi)(\overline{x}) \partial \psi(\overline{x})$.

We will now show the second inclusion, i.e. $\partial (\phi \circ \psi)(\overline{x}) \supset (\phi' \circ \psi)(\overline{x}) \partial \psi(\overline{x})$, for $\overline{x} \in \RR^N$. 
Let $v(\overline{x}) = (\phi' \circ \psi)(\overline{x}) r(\overline{x})$, with $r(\overline{x}) \in \partial \psi(\overline{x})$. Then, according to Definition~\ref{Def:subdiff}, there exists a sequence $\big(x_k, r(x_k)\big)_{k\in \NN}$ converging to $(\overline{x}, r(\overline{x}))$, such that $\psi(x_k) \to \psi(\overline{x})$ and $r(x_k) \in \widehat{\partial} \psi(x_k)$. 
According to \cite[Prop. 8.5]{Rockafellar98}, for every $k\in \NN$, on a neighbourhood $\Nc(x_k)$ of $x_k$, there exists a differentiable function $\rho_k$ such that $\nabla \rho_k(x_k) = r(x_k)$, $\rho_k(x_k) = \psi(x_k)$, and , for every $x \in \mathcal{N}(x_k) \setminus \{x_k\}$, $\rho_k(x) < \psi(x)$. 
Let $\overline{\rho}_k = (\phi \circ \rho_k)$. Since $\phi$ is differentiable, the function $\overline{\rho}_k$ is also differentiable, and we have
\begin{equation}
\nabla \overline{\rho}_k(x_k) 
= (\phi' \circ \rho_k)(x_k) \nabla \rho_k(x_k) 
= (\phi' \circ \rho_k)(x_k) r(x_k) 
= (\phi' \circ \psi)(x_k) r(x_k).
\end{equation}
In addition, we have $\overline{\rho}_k(x_k) = (\phi \circ \psi)(x_k)$ and, since $\phi$ is a continuous and strictly increasing function, for every $x \in \mathcal{N}(x_k) \setminus \{x_k\}$, $\overline{\rho}_k(x) = (\phi \circ \rho_k)(x) < (\phi \circ \psi)(x)$. 
Then, using again \cite[Prop. 8.5]{Rockafellar98}, we deduce that $v(x_k) := (\phi' \circ \psi)(x_k) r(x_k) \in \widehat{\partial} ( \phi \circ \psi ) (x_k)$.
In addition, since $x_k \to \overline{x}$ and $(\phi \circ \psi)$ is a continuous function, we have $(\phi \circ \psi)(x_k) \to (\phi \circ \psi)(\overline{x})$.
Finally, since $x_k \to \overline{x}$ and $(\phi' \circ \psi)$ is continuous, we have $ (\phi' \circ \psi)(x_k) \to (\phi' \circ \psi)(\overline{x}) $. In addition, since $r(x_k) \to r(\overline{x})$, we have $v(x_k)
\to (\phi' \circ \psi)(\overline{x}) r(\overline{x}) = v(\overline{x})$.
According to Definition~\ref{Def:subdiff}, we can deduce that $ v(\overline{x}) \in \partial (\phi \circ \psi ) (\overline{x})$. 
\end{proof}

\section{Proposed optimization method and assumptions}
\label{Sec:method}

Before giving the assumptions necessary to prove the convergence of the proposed method, we would emphasize that the C2FB algorithm described in~\eqref{algo:FB_MM} can be rewritten using the proximity operator of $\sum_{p=1}^P \lambda_{p,k} \psi_p$ instead of the proximity operator of $q(., x_k)$.

Let, for every $ x \in \RR^N$ and $k\in \NN$,
\begin{equation}	\label{eq:def:lk_lambdak}
\displaystyle l_k(x) = \sum_{p=1}^P \lambda_{p,k} \psi_p(x)
\quad \text{ where } \quad
(\forall p \in \{1, \ldots, P\})\quad
\lambda_{p,k} =  (\phi_p' \circ \psi_p)(x_k).
\end{equation}
Then, according to \eqref{eq:maj_phi1}, for every $k\in \NN$, we have $q( \cdot, x_k) = l_k + C_k$, where $C_k \in \RR$. Consequently, algorithm~\eqref{algo:FB_MM} can be reformulated as follows:
\begin{equation}	\label{algo:FB_MMbis}
\begin{array}{l}
x_0 \in \dom g,	\\
\text{for } k = 0,1, \ldots	\\
\left\lfloor
\begin{array}{l}	
\displaystyle \widetilde{x}_{k,0} = x_k,	\\
\text{for } i = 0, \ldots, I_k-1	\\
\left\lfloor
\begin{array}{l}
\displaystyle \widetilde{x}_{k,i+1} = \prox_{l_k}^{\gamma_{k,i}^{-1} A_{k,i}} \Big( \widetilde{x}_{k,i} - \gamma_{k,i} A_{k,i}^{-1} \nabla h(\widetilde{x}_{k,i}) \Big),
\end{array}
\right.	\\[0.1cm]
\displaystyle x_{k+1} = \widetilde{x}_{k,I_k}.
\end{array}
\right.
\end{array}
\end{equation}
Using the definition of the proximity operator, we can deduce that, for every $k\in \NN$ and $i\in \{0, \ldots, I_k\}$ we have $\widetilde{x}_{k,i} \in \cap_{p=1}^P \dom \psi_p = \dom g$.

We can observe two particular cases of algorithm~\eqref{algo:FB_MMbis}. 
On the one hand, in the particular case when, for every $k\in \NN$, $I_k = 1$, then algorithm~\eqref{algo:FB_MMbis} reads
\begin{equation}	\label{algo:FB_MM1bis}
\begin{array}{l}
x_0 \in \dom g,	\\
\text{for } k = 0,1, \ldots	\\
\left\lfloor
\begin{array}{l}
\displaystyle x_{k+1} = \prox_{l_k}^{\gamma_k^{-1} A_k} \Big( x_k - \gamma_k A_k^{-1} \nabla h(x_k) \Big),
\end{array}
\right.
\end{array}
\end{equation}
where $l_k$ is given by~\eqref{eq:def:lk_lambdak}. 
Algorithm~\eqref{algo:FB_MM1bis} requires to redefine the majorant function $q(\cdot, x_k)$ at each iteration $k\in \NN$, while in algorithm~\eqref{algo:FB_MMbis} the majorant function is fixed for a finite number of iterations $I_k$. 
On the other hand, as emphasized in the introduction, under technical assumptions, in the limit case when, for every $k\in \NN$, $I_k \to \infty$, according to \cite{Chouzenoux13} the sequence $(\widetilde{x}_{k,i})_{i \in \NN}$ converges to a critical point of $h+l_k$. In other words, each inner-loop in algorithm~\eqref{algo:FB_MMbis} corresponds to the VMFB algorithm as defined in \cite{Chouzenoux13}, for minimizing $h+l_k$, leading to algorithm~\eqref{algo:FB_MM_approx}.

\subsection{Assumptions}

In the remainder of this work, we will focus on functions $h$ and $g$ satisfying the following assumptions. 
Examples of functions satisfying the needed assumptions are described in \cref{Sec:examples}.
\begin{assumption}	\label{Ass:global}\
\begin{enumerate}
\item	\label{Ass:global:i}
The function $h \colon \RR^N \to \RR$ is differentiable, and has a $\mu$-Lipschitzian gradient, with $\mu>0$, i.e., for every $(x,x') \in (\RR^N)^2$, $\| \nabla h(x) - \nabla h(x') \| \le \mu \| x-x' \|$.

\item	\label{Ass:global:ii}
For every $p\in \{1, \ldots,P\}$, $\psi_p \colon \RR^N \to [0, +\infty]$ is a convex, proper and lower-semicontinuous function. 
Moreover, it is Lipschitz-continuous on its domain. 

\item	\label{Ass:global:iii}
For every $p\in \{1, \ldots,P\}$, the function $\phi_p \colon  [0, +\infty] \to ]-\infty, +\infty]$ is a concave and strictly increasing function (i.e. $\phi_p'(u)>0$ for every $u \in [0,+\infty[$). Moreover, it is differentiable on $[0,+\infty[$.

\item	\label{Ass:global:ii-iii}
For every $p\in \{1, \ldots,P\}$, $\phi_p' $ is locally Lipschitz continuous on its domain.

\item	\label{Ass:global:iv}
The function $f$ is coercive, i.e. $\lim_{\|x\|\to +\infty} f(x) = +\infty$.

\end{enumerate}
\end{assumption}

The remark below provides comments on \cref{Ass:global}.

\begin{remark}\	\label{remark:glob}
\begin{enumerate}
\item	\label{remark:glob:i}
According to \cref{Ass:global}\ref{Ass:global:i}-\ref{Ass:global:iii}, $f$ is continuous on \linebreak${\dom g = \cap_{p=1}^P \dom \psi_p}$. 
In addition, according to \cref{Ass:global}\ref{Ass:global:iv}, we can deduce that, for every $x \in \dom g$, $\text{lev}_{\le f(x)} f$ is compact (\cite[Prop. 11.12]{Combettes_1996_book}).

\item	\label{remark:glob:ii}
According to \cref{Ass:global}\ref{Ass:global:ii}, there exists $\upsilon >0$ such that, for every $p\in \{1, \ldots, P\}$, $\|r_p(x)\| \le \upsilon$, for every $r_p(x) \in \partial \psi_p(x)$, with $x\in \dom \psi_p$. This assumption is satisfied for simple choices of $\psi_p$ (see \cref{Sec:examples} for examples). 

\item	\label{remark:glob:iii}
According to \cref{Ass:global}\ref{Ass:global:ii}-\ref{Ass:global:iii}, $\phi_p' \circ \psi_p$ is continuous on its domain.

\item	\label{remark:glob:iv}
According to \cref{Ass:global}\ref{Ass:global:ii}-\ref{Ass:global:iii} \cref{remark:glob}-\ref{remark:glob:iii}, for every $k\in \NN$ and $p\in \{1, \ldots, P\}$, the parameter $\lambda_{p,k}>0$ introduced in \eqref{eq:def:lk_lambdak} is well defined.

\item	\label{remark:glob:va}
\cref{Ass:global}\ref{Ass:global:ii-iii} holds if, for every $p\in \{1, \ldots,P\}$, $\phi_p$ is $\mathcal{C}^2$ on its domain.

\item	\label{remark:glob:v}
\cref{Ass:global}\ref{Ass:global:ii-iii} holds if and only if, for every $p\in \{1, \ldots,P\}$, the function $\phi_p' \circ \psi_p$ is Lipschitz continuous on every compact subset of $\RR^N$. 
Thus, under \cref{Ass:global}\ref{Ass:global:ii-iii}, there exists $\eta>0$ such that, for every $x \in \dom g$, and $(x', x'') \in \big(\text{lev}_{\le f(x)} f \big)^2$, $\| (\phi_p' \circ \psi_p)(x') - (\phi_p' \circ \psi_p)(x'') \| \le \eta \| x'-x'' \|$.

\end{enumerate}
\end{remark}

For every $k\in \NN$, the SPD matrices $(A_{k,i})_{0 \le i \le I_k-1}$ are used in practice to accelerate the convergence of usual FB methods. They are chosen using the method proposed in \cite{Chouzenoux13, Chouzenoux_2016, frankel2015}, leveraging an MM approach. We define them as follows:
\begin{assumption}	\label{Ass:maj_quad}
Let, for every $k\in \NN$, $(\widetilde{x}_{k,i})_{0 \le i \le I_k}$ be generated by algorithm~\eqref{algo:FB_MM}.
\begin{enumerate}
\item	\label{Ass:maj_quad:i}
For every $k\in \NN$, we have
\begin{equation}
(\forall x \in \RR^N) \quad
h(x) \le h(\widetilde{x}_{k,i}) + \scal{x-\widetilde{x}_{k,i}}{\nabla h (\widetilde{x}_{k,i})} + \frac12 \| x-\widetilde{x}_{k,i} \|^2_{A_{k,i}}.
\end{equation}
\item	\label{Ass:maj_quad:ii}
There exists $(\underline{\nu},\overline{\nu}) \in ]0,+\infty[^2$ such that, for every $k \in \NN$ and $i \in \{0, \ldots, I_k-1\}$, $\underline{\nu} \Id_N \preccurlyeq A_{k,i} \preccurlyeq \overline{\nu} \Id_N$.
\end{enumerate}
\end{assumption}

\begin{remark}
According to \cite[Lem. 2.1]{Chouzenoux13}, under Assumption~\ref{Ass:global}\ref{Ass:global:i}, \linebreak Assumption~\ref{Ass:maj_quad} is trivially satisfied when choosing, for every $k\in \NN$ and for every $i \in \{0, \ldots, I_k-1\}$, $A_{k,i} = \mu \Id_N$ and $\underline{\nu} = \overline{\nu} = \mu$.  
\end{remark}

The two last assumptions are made to ensure that the step-sizes \linebreak $(\gamma_{k,i})_{k\in \NN, 0 \le i \le I_k-1}$ are bounded, and the inner-iteration numbers $(I_k)_{k\in \NN}$ are finite.
\begin{assumption}	\label{Ass:stepsize}
There exists $(\underline{\gamma},\overline{\gamma}) \in ]0,+\infty[^2$ such that, for every $k \in \NN$ and for every $i \in \{0, \ldots, I_k-1\}$ we have $\underline{\gamma} \le \gamma_{k,i} \le 1-\overline{\gamma}$.
\end{assumption}

\begin{assumption}	\label{Ass:max_subit}
There exists $\overline{I} \in \NN^*$ such that, for every $k\in \NN$, $0 < I_k \le \overline{I} < +\infty$.
\end{assumption}

\subsection{Inexact algorithm}

The proximity operator relative to an arbitrary metric may not have a closed form expression. This is also true for some evolved functions, even when the preconditioning operators are diagonal matrices. For instance when, for every $p\in \{1,\ldots,P\}$, $\psi_p$ is a composition between an $\ell_1$-norm and a non-orthogonal matrix (e.g. to promote sparsity in a redundant dictionary), the computation of the proximity operator of $\psi_p$ is done iteratively \cite{Combettes_Vu_2011}. To circumvent this difficulty, we propose an inexact version of the proposed C2FB algorithm given in~\eqref{algo:FB_MMbis}:
\begin{equation}	\label{algo:FB_MMbis_inexact}
\begin{array}{l}
\text{Let $\alpha \in ]1/2, +\infty[$, $\beta \in ]0,+\infty[$ and } x_0 \in \dom g,	\\
\text{for } k = 0,1, \ldots	\\
\left\lfloor
\begin{array}{l}	
\displaystyle \widetilde{x}_{k,0} = x_k,	\\
\text{for } i = 0, \ldots, I_k-1	\\
\left\lfloor
\begin{array}{l}
\text{find $\widetilde{x}_{k,i+1} \in \RR^N$ and, 
$v_k(\widetilde{x}_{k,i+1}) \in \partial l_k(\widetilde{x}_{k,i+1})$
such that} \\
\displaystyle 
    l_k(\widetilde{x}_{k,i+1})
    + \scal{\widetilde{x}_{k,i+1} - \widetilde{x}_{k,i}}{ \nabla h (\widetilde{x}_{k,i})} \\
    \hfill \displaystyle 
    + \alpha \| \widetilde{x}_{k,i+1} - \widetilde{x}_{k,i} \|_{A_{k,i}}^2
    \le l_k(\widetilde{x}_{k,i})  ,   \\
\displaystyle 
    \| \nabla h (\widetilde{x}_{k,i}) + 
    v_k(\widetilde{x}_{k,i+1})
    \| \le \beta \| \widetilde{x}_{k,i+1} - \widetilde{x}_{k,i} \|_{A_{k,i}}, \\
\end{array}
\right.	\\[0.1cm]
\displaystyle x_{k+1} = \widetilde{x}_{k,I_k}.
\end{array}
\right.
\end{array}
\end{equation}
In algorithm~\eqref{algo:FB_MMbis_inexact}, for every $k\in \NN$ and $i\in \{1, \ldots,I_k-1\}$, $A_{k,i} \in \RR^{N \times N}$ is an SDP matrix satisfying \cref{Ass:maj_quad}, and $\partial l_k = \sum_{p=1}^P \lambda_{p,k} \partial \psi_p$ (see \cref{Prop:chain-rules}). Thus, finding $v_k(\tilde{x}_{k,i+1}) \in \partial l_k(\tilde{x}_{k,i+1}) $ is equivalent to finding, for every $p\in \{1, \ldots, P\}$, $r_p(x_{k+1}) \in \partial \psi_p(x_{k+1})$.

Under our assumptions, algorithm~\eqref{algo:FB_MMbis_inexact} can be viewed as an inexact version of algorithm~\eqref{algo:FB_MMbis} (or equivalently algorithm~\eqref{algo:FB_MM}), where, at each iteration $k\in \NN$, the proximity operator of $l_k$ (or equivalently $\sum_p \lambda_{p,k} \psi_p$) can be computed inexactly (i.e. using sub-iterations). 
The first inequality in algorithm~\eqref{algo:FB_MMbis_inexact} is called \emph{sufficient-decrease condition}, while the second inequality is the \emph{inexact optimality condition}. These two conditions allow to handle possible errors arising when the proximity operator is computed approximately, and are often referred to \emph{relative errors}. Although they are common for FB algorithms in a nonconvex context (see e.g. \cite{Attouch_Bolte_2011, Chouzenoux13, Chouzenoux_2016}), they are more of theoretical interest, showing that the algorithm is robust with respect to inexact computations, than of practical use. 
Note that implementable inexactness conditions for the FB algorithm have been proposed in other works (see e.g. \cite{Villa2013} in the convex context and \cite{Bonettini17} in the non-convex context).

We now show formally that, in particular, sequence $(x_k)_{k\in \NN}$ and, for every $k\in \NN$, $(\widetilde{x}_{k,i})_{ 0 \le i \le I_k}$ generated by algorithm~\eqref{algo:FB_MMbis} satisfy the sufficient decrease condition and the inexact optimality condition of algorithm~\eqref{algo:FB_MMbis_inexact}. This will show that algorithm~\eqref{algo:FB_MMbis_inexact} can be viewed as an inexact version of algorithm~\eqref{algo:FB_MMbis}. \\
%
Let $k\in \NN$ and $i\in \{0, \ldots, I_k-1\}$. Using \cref{def:prox}, we have
\begin{multline}
    l_k(\widetilde{x}_{k,i+1})
    + \scal{\widetilde{x}_{k,i+1} - \widetilde{x}_{k,i}}{ \nabla h (\widetilde{x}_{k,i})}
    + \frac{1}{2\gamma_{k,i}} \| \widetilde{x}_{k,i+1} - \widetilde{x}_{k,i} \|_{A_{k,i}}^2 
    \le l_k(\widetilde{x}_{k,i}).
\end{multline}
According to \cref{Ass:stepsize} the first condition in algorithm~\eqref{algo:FB_MMbis_inexact} (i.e. \emph{sufficient-decrease condition}) is obtained with $\alpha = (1-\overline{\gamma})^{-1}/2$.

The second condition in algorithm~\eqref{algo:FB_MMbis_inexact} (i.e. \emph{inexact optimality condition}) is obtained combining algorithm~\eqref{algo:FB_MMbis} with \cref{Ass:maj_quad}\ref{Ass:maj_quad:ii} and \cref{Ass:stepsize}. 
Indeed, using the variational characterization of the proximity operator, we have, for every $k \in \NN$ and $i\in \{0, \ldots, I_k-1\}$,
\begin{align}
&	\widetilde{x}_{k,i+1} = \prox_{l_k}^{\gamma_{k,i}^{-1} A_{k,i}} \big( \widetilde{x}_{k,i} - \gamma_{k,i} A_{k,i}^{-1} \nabla h(\widetilde{x}_{k,i}) \big)		\nonumber	\\
\Leftrightarrow \quad
&	\widetilde{x}_{k,i} - \gamma_{k,i} A_{k,i}^{-1} \nabla h(\widetilde{x}_{k,i}) - \widetilde{x}_{k,i+1}	 \in  \gamma_{k,i} A_{k,i}^{-1}  \partial l_k(\widetilde{x}_{k,i+1})		\label{eq:charac_prox}
\end{align}
Therefore, 
there exists $v_k(\widetilde{x}_{k,i+1}) \in \partial l_k(\widetilde{x}_{k,i+1})$
such that $v_k(\widetilde{x}_{k,i+1}) = \gamma_{k,i}^{-1} A_{k,i} \big( \widetilde{x}_{k,i} - \widetilde{x}_{k,i+1} \big) - \nabla h(\widetilde{x}_{k,i})$.
Then, using \cref{Ass:maj_quad}\ref{Ass:maj_quad:ii} and \cref{Ass:stepsize}, we obtain
\begin{equation}
    \| \nabla h(\widetilde{x}_{k,i}) +  v_k(\widetilde{x}_{k,i+1}) \|
    = \gamma_{k,i}^{-1} \| A_{k,i} \big( \widetilde{x}_{k,i+1} - \widetilde{x}_{k,i} \big) \|  
    \le \underline{\gamma}^{-1} \sqrt{\overline{\nu}} \| \widetilde{x}_{k,i+1} - \widetilde{x}_{k,i} \|_{A_{k,i}},
\end{equation}
and the inexact optimality condition is obtained with $\beta = \underline{\gamma}^{-1} \sqrt{\overline{\nu}}$.

\section{Convergence analysis}
\label{Sec:cvgce}

\subsection{Descent properties}


In this section, we provide convergence properties~on $(f(x_k))_{k\in \NN}$ for $(x_k)_{k \in \NN}$ generated by algorithm~\eqref{algo:FB_MMbis_inexact}. We also investigate the behaviour of the approximate objective function at the current iteration $k\in \NN$, defined~by
\begin{equation}	\label{Def:f_k}
(\forall x \in \dom g) \quad
f_k(x) = h(x) +  q(x, x_k).
\end{equation}

As a first step, for $k\in \NN$ fixed, we focus on the sub-iterations $i \in \{0, \ldots, I_k\}$ and we investigate the behaviour of $(f_k(\widetilde{x}_{k,i}))_{0 \le i \le I_k}$. In particular, in the following lemma we show that $(f_k(\widetilde{x}_{k,i}))_{0 \le i \le I_k}$ is non-increasing.

\begin{lemma}	\label{Lemma:decr00}
Let $(x_k)_{k \in \NN}$ and, for every $k\in \NN$, $(\widetilde{x}_{k,i})_{ 0 \le i \le I_k}$ be generated by \linebreak algorithm~\eqref{algo:FB_MMbis_inexact}. 
Let $k\in \NN$ and $0 \le i_1 < i_2 \le I_k$. 
Under Assumptions~\ref{Ass:global}, \ref{Ass:maj_quad}, \ref{Ass:stepsize} and \ref{Ass:max_subit}, we have
\begin{equation}
h(\widetilde{x}_{k,i_2}) + l_k(\widetilde{x}_{k,i_2})   
\le
	h(\widetilde{x}_{k,i_1}) + l_k(\widetilde{x}_{k,i_1}) 
        - \overline{\alpha} \sum_{i=i_1}^{i_2-1} \|\widetilde{x}_{k,i+1} - \widetilde{x}_{k,i}\|^2		\label{Lemma:decr00:2} 
\end{equation}
where $\overline{\alpha}>0$, and hence
\begin{equation}
f_k(\widetilde{x}_{k,i_2})
\le	
	f_k(\widetilde{x}_{k,i_1}) 
        - \overline{\alpha} \sum_{i=i_1}^{i_2-1} \|\widetilde{x}_{k,i+1} - \widetilde{x}_{k,i}\|^2	.	\label{Lemma:decr00:2b} 
\end{equation}
\end{lemma}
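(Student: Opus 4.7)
The plan is to prove the one-step version of the inequality by combining two ingredients: the quadratic majorization of $h$ at $\widetilde{x}_{k,i}$ provided by \cref{Ass:maj_quad}\ref{Ass:maj_quad:i}, and the sufficient-decrease condition built into algorithm~\eqref{algo:FB_MMbis_inexact}. Once we have the one-step decrease, \eqref{Lemma:decr00:2} follows by a telescoping sum, and \eqref{Lemma:decr00:2b} is an immediate corollary since $f_k$ differs from $h+l_k$ only by a constant depending on $x_k$ (coming from the tangent formula~\eqref{eq:maj_phi1}).

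First, I would fix $k\in\NN$ and $i\in\{0,\ldots,I_k-1\}$. By \cref{Ass:maj_quad}\ref{Ass:maj_quad:i} applied at the point $x=\widetilde{x}_{k,i+1}$,
\begin{equation*}
h(\widetilde{x}_{k,i+1}) \le h(\widetilde{x}_{k,i}) + \scal{\widetilde{x}_{k,i+1}-\widetilde{x}_{k,i}}{\nabla h(\widetilde{x}_{k,i})} + \tfrac{1}{2}\|\widetilde{x}_{k,i+1}-\widetilde{x}_{k,i}\|^2_{A_{k,i}}.
\end{equation*}
Adding this to the sufficient-decrease condition of algorithm~\eqref{algo:FB_MMbis_inexact}, the linear terms in $\nabla h(\widetilde{x}_{k,i})$ cancel, and we get
\begin{equation*}
h(\widetilde{x}_{k,i+1}) + l_k(\widetilde{x}_{k,i+1}) \le h(\widetilde{x}_{k,i}) + l_k(\widetilde{x}_{k,i}) - \bigl(\alpha-\tfrac{1}{2}\bigr)\|\widetilde{x}_{k,i+1}-\widetilde{x}_{k,i}\|^2_{A_{k,i}}.
\end{equation*}
Since $\alpha>1/2$ and $A_{k,i}\succcurlyeq \underline{\nu}\Id_N$ by \cref{Ass:maj_quad}\ref{Ass:maj_quad:ii}, defining $\overline{\alpha}=(\alpha-1/2)\underline{\nu}>0$ turns the weighted norm into the Euclidean norm, yielding the one-step bound
\begin{equation*}
h(\widetilde{x}_{k,i+1}) + l_k(\widetilde{x}_{k,i+1}) \le h(\widetilde{x}_{k,i}) + l_k(\widetilde{x}_{k,i}) - \overline{\alpha}\,\|\widetilde{x}_{k,i+1}-\widetilde{x}_{k,i}\|^2.
\end{equation*}

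Next, I would telescope this one-step inequality for $i$ ranging from $i_1$ to $i_2-1$, which directly gives~\eqref{Lemma:decr00:2}. For the second inequality, I would use the identity already pointed out at the beginning of \cref{Sec:method}: by construction~\eqref{eq:maj_phi1}, $q(\cdot,x_k) = l_k + C_k$ where $C_k = \sum_{p=1}^P \bigl[(\phi_p\circ\psi_p)(x_k) - \lambda_{p,k}\psi_p(x_k)\bigr]$ depends only on $x_k$. Consequently, for every $i$, $f_k(\widetilde{x}_{k,i}) = h(\widetilde{x}_{k,i}) + l_k(\widetilde{x}_{k,i}) + C_k$. Subtracting $f_k(\widetilde{x}_{k,i_1})$ from $f_k(\widetilde{x}_{k,i_2})$ makes $C_k$ drop out, so~\eqref{Lemma:decr00:2b} follows verbatim from~\eqref{Lemma:decr00:2}.

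There is no real obstacle here; the only subtlety is being careful that the sufficient-decrease condition in algorithm~\eqref{algo:FB_MMbis_inexact} uses coefficient $\alpha$ on the weighted norm, whereas the descent lemma~on $h$ contributes only $1/2$, so that we need $\alpha>1/2$ to obtain a strict descent constant $\overline{\alpha}>0$ after combining the two. \cref{Ass:stepsize} and \cref{Ass:max_subit} are not needed at this stage (they enter later to ensure boundedness of step-sizes and finiteness of inner loops), so I would not invoke them in the body of the argument, and the result holds pointwise in $k$ regardless of the value of $I_k$.
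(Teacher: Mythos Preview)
Your proof is correct and follows essentially the same approach as the paper: combine the sufficient-decrease condition with the quadratic majorant of $h$ from \cref{Ass:maj_quad}\ref{Ass:maj_quad:i}, use \cref{Ass:maj_quad}\ref{Ass:maj_quad:ii} to pass from the $A_{k,i}$-norm to the Euclidean norm with $\overline{\alpha}=(\alpha-1/2)\underline{\nu}$, and then telescope; the only cosmetic difference is that the paper sums each inequality over $i$ first and then combines, whereas you derive the one-step bound first and then telescope. Your observation that \cref{Ass:stepsize} and \cref{Ass:max_subit} are not actually used in this argument is also correct.
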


\begin{proof}
Let $(x_k)_{k \in \NN}$ and, for every $k\in \NN$, $(\widetilde{x}_{k,i})_{ 0 \le i \le I_k}$ be generated by algorithm~\eqref{algo:FB_MMbis_inexact}. 
Using the sufficient decrease condition in algorithm~\eqref{algo:FB_MMbis_inexact} (i.e. the first inequality), we have, for every $k\in \NN$ and $i\in \{0, \ldots, I_k-1\}$, 
\begin{equation}
     \Big( l_k( \widetilde{x}_{k,i+1}) - l_k( \widetilde{x}_{k,i}) \Big)
    + \alpha \| \widetilde{x}_{k,i+1} - \widetilde{x}_{k,i} \|_{A_{k,i}}^2  
    \le - \scal{\widetilde{x}_{k,i+1}-\widetilde{x}_{k,i}}{ \nabla h(\widetilde{x}_{k,i})}.
\end{equation}
Let $0 \le i_1 < i_2 \le I_k$.
Summing on $i\in \{i_1 ,\ldots, i_2-1\}$, we obtain
\begin{multline}    \label{lemma:decr00:proof:i}
    \Big( l_k( \widetilde{x}_{k,i_2}) - l_k( \widetilde{x}_{k,i_1}) \Big)
    + \alpha \sum_{i=i_1}^{i_2-1} \| \widetilde{x}_{k,i+1} - \widetilde{x}_{k,i} \|_{A_{k,i}}^2   \\
    \le - \sum_{i=i_1}^{i_2-1} \scal{\widetilde{x}_{k,i+1}-\widetilde{x}_{k,i}}{ \nabla h(\widetilde{x}_{k,i})}.
\end{multline}
According to \cref{Ass:maj_quad}\ref{Ass:maj_quad:i}, we have
\begin{equation}
    h(\widetilde{x}_{k,i+1}) 
    \le h(\widetilde{x}_{k,i}) + \scal{\widetilde{x}_{k,i+1} - \widetilde{x}_{k,i}}{ \nabla h(\widetilde{x}_{k,i})} + \frac12 \| \widetilde{x}_{k,i+1} - \widetilde{x}_{k,i} \|_{A_{k,i}}^2.
\end{equation}
Summing the last inequality on $i \in \{i_1, \ldots, i_2-1\}$, we obtain
\begin{multline}
    - \sum_{i=i_1}^{i_2-1} \scal{\widetilde{x}_{k,i+1}-\widetilde{x}_{k,i}}{ \nabla h(\widetilde{x}_{k,i})}   \\
    \le \Big( h(\widetilde{x}_{k,i_1}) - h(\widetilde{x}_{k,i_2}) \Big) + \frac12 \sum_{i=i_1}^{i_2-1} \| \widetilde{x}_{k,i+1} - \widetilde{x}_{k,i} \|_{A_{k,i}}^2,
\end{multline}
which, combined with \eqref{lemma:decr00:proof:i}, leads to 
\begin{multline}		\label{lemma:decr00:proof:2}
    h(\widetilde{x}_{k,i_2}) + l_k( \widetilde{x}_{k,i_2})   
    \le h(\widetilde{x}_{k,i_1}) + l_k( \widetilde{x}_{k,i_1}) 
    - (\alpha - \frac12) \sum_{i=i_1}^{i_2-1} \| \widetilde{x}_{k,i+1} - \widetilde{x}_{k,i} \|_{A_{k,i}}^2
\end{multline}
Then, using \cref{Ass:maj_quad}\ref{Ass:maj_quad:ii}, there exists $\overline{\alpha} = \underline{\nu} (\alpha - 1/2) $ such that \eqref{Lemma:decr00:2} is satisfied.

Equation~\eqref{Lemma:decr00:2b} is then obtained by adding the term $\sum_{p=1}^P\Big(  (\phi_p \circ \psi_p)(x_k) - \lambda_{p,k} \psi_p(x_k) \Big)$, constant over $i$, on both sides of~\eqref{lemma:decr00:proof:2}, and using the definitions of function $q( \cdot, x_k)$ (see~\eqref{eq:maj_phi3}-\eqref{eq:maj_phi1}), of function $l_k$ (see~\eqref{eq:def:lk_lambdak}) and of function $f_k$ (see~\eqref{Def:f_k}).

\end{proof}

In the following proposition, we investigate the behaviour of the global sequences \linebreak $\big( f(x_k) \big)_{k\in \NN}$ and $\big( \| x_{k+1} - x_k \|^2 \big)_{k\in \NN}$, where $(x_k)_{k\in \NN}$ is generated by algorithm~\eqref{algo:FB_MMbis_inexact}.

\begin{proposition}	\label{Prop:decr1}
Let $(x_k)_{k \in \NN}$ and, for every $k\in \NN$, $(\widetilde{x}_{k,i})_{ 0 \le i \le I_k}$ be generated by algorithm~\eqref{algo:FB_MMbis_inexact}. 
Under Assumptions~\ref{Ass:global}, \ref{Ass:maj_quad}, \ref{Ass:stepsize} and \ref{Ass:max_subit}, the following assertions hold: 
\begin{enumerate}
\item	\label{Prop:decr1:i}
For every $k\in \NN$
, we have
\begin{equation}
f(x_{k+1}) \le f_k(x_{k+1}) \le f(x_k) - \overline{\alpha} \| \chi_k \|^2,
\label{Prop:decr:f_k} 
\end{equation}
where $\overline{\alpha}>0$ is given in \cref{Lemma:decr00}, and $\chi_k = (\widetilde{x}_{k,i+1} - \widetilde{x}_{k,i})_{0 \le i \le I_k-1} \in \RR^{I_k N}$.

In addition, we have
\begin{equation}	\label{Prop:decr:f-xk} 
f(x_{k+1})  \le  f(x_k) - \overline{\alpha} \overline{I}^{-1} \| x_{k+1} - x_k \|^2.
\end{equation}

\item	\label{Prop:decr1:ii}
$\big( f(x_k) \big)_{k\in \NN}$ is a converging non-increasing sequence. 

\item	\label{Prop:decr1:iii}
We have $\displaystyle \sum_{k\in \NN} \| \chi_k \|^2 < + \infty$ and $\displaystyle \sum_{k\in \NN} \| x_{k+1}-x_k \|^2 < + \infty$, and thus \linebreak
$\displaystyle  \lim_{k\to \infty} \| \chi_k \| = 0$ and $\displaystyle  \lim_{k\to \infty} \| x_{k+1} - x_k \| = 0$.
\end{enumerate}
\end{proposition}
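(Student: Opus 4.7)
The plan is to build all three assertions out of \cref{Lemma:decr00} applied over a single outer iteration, together with the majorization property \eqref{eq:maj_phi3} linking $f$ and $f_k$.

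For part \ref{Prop:decr1:i}, I would first invoke \cref{Lemma:decr00} with $i_1 = 0$ and $i_2 = I_k$. Since $\widetilde{x}_{k,0} = x_k$ and $\widetilde{x}_{k,I_k} = x_{k+1}$, and by the definition of $f_k$ in \eqref{Def:f_k} combined with $f_k(x_k) = f(x_k)$ (the equality part of the majorant property \eqref{eq:maj_phi3}), the inequality \eqref{Lemma:decr00:2b} gives directly
\begin{equation*}
  f_k(x_{k+1}) \le f_k(x_k) - \overline{\alpha}\sum_{i=0}^{I_k-1} \|\widetilde{x}_{k,i+1}-\widetilde{x}_{k,i}\|^2 = f(x_k) - \overline{\alpha}\|\chi_k\|^2.
\end{equation*}
The inequality $f(x_{k+1}) \le f_k(x_{k+1})$ then follows from the majorant inequality in \eqref{eq:maj_phi3} applied at $x = x_{k+1}$. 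To obtain \eqref{Prop:decr:f-xk}, write $x_{k+1}-x_k$ as the telescoping sum $\sum_{i=0}^{I_k-1}(\widetilde{x}_{k,i+1}-\widetilde{x}_{k,i})$ and apply Cauchy–Schwarz (or Jensen) to get $\|x_{k+1}-x_k\|^2 \le I_k \|\chi_k\|^2 \le \overline{I}\|\chi_k\|^2$ by \cref{Ass:max_subit}; substituting this into \eqref{Prop:decr:f_k} yields the claim.

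For part \ref{Prop:decr1:ii}, monotonicity of $(f(x_k))_{k\in\NN}$ is immediate from \eqref{Prop:decr:f-xk}. For convergence, I would argue that $f$ is bounded from below: $h$ is continuous and $g$ is lower-semicontinuous (as a finite sum of compositions $\phi_p\circ\psi_p$, continuous on $\dom g$), so $f$ is lower-semicontinuous; combined with \cref{Ass:global}\ref{Ass:global:iv} and \cref{remark:glob}\ref{remark:glob:i}, the level set $\text{lev}_{\le f(x_0)} f$ is nonempty and compact, on which $f$ attains its infimum. Therefore $(f(x_k))_{k\in\NN}$ is a non-increasing sequence bounded below, hence convergent.

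Part \ref{Prop:decr1:iii} is a standard telescoping argument: summing \eqref{Prop:decr:f_k} from $k=0$ to $k=K$ and using non-negativity of the norms gives
\begin{equation*}
  \overline{\alpha}\sum_{k=0}^{K}\|\chi_k\|^2 \le f(x_0) - f(x_{K+1}) \le f(x_0) - \inf f < +\infty,
\end{equation*}
and letting $K\to\infty$ yields $\sum_{k\in\NN}\|\chi_k\|^2 < +\infty$. The same argument, or directly the bound $\|x_{k+1}-x_k\|^2 \le \overline{I}\|\chi_k\|^2$ obtained in part~\ref{Prop:decr1:i}, gives $\sum_{k\in\NN}\|x_{k+1}-x_k\|^2 < +\infty$. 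Convergence of the series implies that the general terms tend to zero, finishing the proof. The only delicate point I foresee is rigorously justifying that $f$ is bounded below—this is not deep but must be spelled out, since \cref{Ass:global} only imposes coercivity and not explicit boundedness below; once lower-semicontinuity of $g$ (and hence $f$) is noted, coercivity closes the argument via the compact sublevel set.
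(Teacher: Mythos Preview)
Your proof is correct and follows essentially the same approach as the paper: applying \cref{Lemma:decr00} with $i_1=0$, $i_2=I_k$ together with the majorant property \eqref{eq:maj_phi3} for part~\ref{Prop:decr1:i}, then Jensen (Cauchy--Schwarz) for \eqref{Prop:decr:f-xk}, and a standard telescoping-plus-boundedness argument for parts~\ref{Prop:decr1:ii} and~\ref{Prop:decr1:iii}. The only cosmetic difference is that the paper bounds the partial sums by $f(x_0)-\xi$ with $\xi=\lim_k f(x_k)$ rather than $f(x_0)-\inf f$, but both are equivalent here.
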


\begin{proof}
\begin{enumerate}
\item
The first inequality~\eqref{Prop:decr:f_k} is a direct consequence of \cref{Lemma:decr00}.
On the one hand, the right-sided inequality in \eqref{Prop:decr:f_k} is obtained by choosing $i_1 = 0$, and $i_2 = I_k$ in \eqref{Lemma:decr00:2b}.  Indeed, noticing that $\widetilde{x}_{k,i_1} = x_k$ , $\widetilde{x}_{k, I_k} = x_{k+1}$ and that $f_k(x_k) = f(x_k)$, we have:
\begin{align}
f_k(\widetilde{x}_{k,I_k}) =
f_k(x_{k+1}) 
&	\le f_k(\widetilde{x}_{k,0}) - \overline{\alpha} \sum_{i=0}^{I_k-1} \| \widetilde{x}_{k,i+1} - \widetilde{x}_{k,i} \|^2		\nonumber	\\
&	\le f(x_k) - \overline{\alpha}  \| \chi_k \|^2.
\end{align}
On the other hand, the left-sided inequality in \eqref{Prop:decr:f_k} is obtained using the majoration condition of function $q( \cdot, x_k)$ (see \eqref{eq:maj_phi3}):
\begin{align}
f_k(x_{k+1})
&	= h(x_{k+1}) + q(x_{k+1}, x_k)	
\ge h(x_{k+1}) + g(x_{k+1})
= f(x_{k+1}).
\end{align}


The second inequality \eqref{Prop:decr:f-xk} is obtained using Jensen's inequality, as
\begin{align} \label{eq:pr:decr1:jensen}
\|x_{k+1} - x_k \|^2
&= \| \sum_{i=0}^{I_k-1} (\widetilde{x}_{k,i+1} - \widetilde{x}_{k,i}) \|^2	\nonumber		\\
&\le I_k \sum_{i=0}^{I_k-1} \| \widetilde{x}_{k,i+1} - \widetilde{x}_{k,i} \|^2
= I_k \| \chi_k\|^2.
\end{align}
The result is then obtained using \cref{Ass:max_subit}.

\item
We deduce directly from \eqref{Prop:decr:f_k} that 
$\big( f(x_k) \big)_{k\in \NN}$ is a non-increasing sequence. 
In addition, combining the fact that, for every $k\in \NN$, $x_k \in \dom g = \dom f$ (according to the definition of the proximity operator) with \cref{remark:glob}\ref{remark:glob:i}, we deduce that $(x_k)_{k\in \NN}$ belongs to a compact subset $E$ of $\text{lev}_{f(x_0)} \subset \dom f$. 
Thus, $f$ being lower bounded (according to \cref{Ass:global} $f$ is continuous on its domain and coercive), $\big(f(x_k)\big)_{k\in \NN}$ converges to a real number $\xi$.

\item
According to \cref{Prop:decr:f_k}, we have $\| \chi_k \|^2 \le \overline{\alpha}^{-1} \Big( f(x_k) - f(x_{k+1}) \Big)$.
Let $K$ be a positive integer. Summing the last inequality from $k=0$ to $K-1$ we have $\sum_{k=0}^{K-1} \| \chi_k \|^2 \le \overline{\alpha}^{-1} \Big( f(x_0) - f(x_K) \Big)$.
Since $(f(x_k))_{k \in \NN}$ is a non-increasing sequence converging to $\xi$, we thus obtain $\sum_{k=0}^{K-1} \| \chi_k \|^2 \le \overline{\alpha}^{-1} \Big( f(x_0) - \xi \Big)$.
The result is then obtained by taking the limit for $K \to +\infty$.

The same arguments can be applied for $(\| x_{k+1} - x_k \|^2)_{k\in \NN}$, leveraging \eqref{Prop:decr:f-xk}.
\end{enumerate}

\end{proof}

Finally, the next proposition will be useful to show that the sequences generated by algorithm~\eqref{algo:FB_MMbis_inexact} are approaching the set of critical points of the global objective function $f$.

\begin{proposition}	\label{prop:critset}
Let $(x_k)_{k \in \NN}$ and, for every $k\in \NN$, $(\widetilde{x}_{k,i})_{ 0 \le i \le I_k}$ be generated by algorithm~\eqref{algo:FB_MMbis_inexact}. 
Under Assumptions~\ref{Ass:global}, \ref{Ass:maj_quad}, \ref{Ass:stepsize} and \ref{Ass:max_subit}, we have, for every $k\in \NN$,
\begin{equation}	\label{prop:critset:subgrad-maj}
\| t(x_{k+1}) \| \le \overline{\beta} \| \chi_k \|,
\end{equation}
where $\overline{\beta} 
> 0$, $\chi_k$ is defined in \cref{Prop:decr1}, and
\begin{equation}	\label{prop:critset:subgrad}
\begin{cases}
\displaystyle t(x_{k+1}) = \nabla h (x_{k+1}) + v_{k+1}(x_{k+1}) \in \partial f(x_{k+1})	,	\\
\displaystyle v_{k+1}(x_{k+1}) = \sum_{p=1}^P \lambda_{p,k+1} r_p(x_{k+1}),
\end{cases}
\end{equation}
with, for every $p\in \{1, \ldots, P\}$, $r_p(x_{k+1}) \in \partial \psi_p(x_{k+1})$ provided by algorithm~\eqref{algo:FB_MMbis_inexact}.
\end{proposition}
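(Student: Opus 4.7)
The plan is to split the statement into a membership claim $t(x_{k+1})\in\partial f(x_{k+1})$ and a quantitative estimate $\|t(x_{k+1})\|\le\overline\beta\,\|\chi_k\|$, and tackle them in that order. For the membership, the inexact optimality condition of algorithm~\eqref{algo:FB_MMbis_inexact} at the last inner iterate $i=I_k-1$ supplies some $v_k(x_{k+1})\in\partial l_k(x_{k+1})$. Since every $\psi_p$ is convex (Assumption~\ref{Ass:global}\ref{Ass:global:ii}), Proposition~\ref{Prop:chain-rules}\ref{Prop:chain-rules:ii} applied to the convex sum $l_k=\sum_p\lambda_{p,k}\psi_p$ decomposes this as $v_k(x_{k+1})=\sum_p\lambda_{p,k}\,r_p(x_{k+1})$ with $r_p(x_{k+1})\in\partial\psi_p(x_{k+1})=\widehat\partial\psi_p(x_{k+1})$, and these are exactly the $r_p$'s used in the statement (only the outer weights differ). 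Reusing the second half of the proof of Proposition~\ref{Lem:subdiff}, each $\lambda_{p,k+1}\,r_p(x_{k+1})=(\phi_p'\circ\psi_p)(x_{k+1})\,r_p(x_{k+1})$ is in fact a Fr\'echet subgradient of $\phi_p\circ\psi_p$ at $x_{k+1}$; the elementary inclusion $\sum_p\widehat\partial g_p\subset\widehat\partial(\sum_p g_p)$ for Fr\'echet subgradients then yields $v_{k+1}(x_{k+1})\in\widehat\partial g(x_{k+1})\subset\partial g(x_{k+1})$, and Proposition~\ref{Prop:chain-rules}\ref{Prop:chain-rules:i} gives $t(x_{k+1})\in\partial f(x_{k+1})$.

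For the bound, the main obstacle is an index shift: the inexact optimality condition anchors its residual at $\widetilde{x}_{k,I_k-1}$ with weights $\lambda_{p,k}$, whereas $t(x_{k+1})$ lives at $x_{k+1}$ with weights $\lambda_{p,k+1}$. I would expose both discrepancies via a triangle inequality,
\begin{equation*}
\|t(x_{k+1})\|\le\|\nabla h(x_{k+1})-\nabla h(\widetilde{x}_{k,I_k-1})\|+\Big\|\nabla h(\widetilde{x}_{k,I_k-1})+\sum_p\lambda_{p,k}\,r_p(x_{k+1})\Big\|+\Big\|\sum_p(\lambda_{p,k+1}-\lambda_{p,k})\,r_p(x_{k+1})\Big\|,
\end{equation*}
and control the three summands by, respectively, the $\mu$-Lipschitzness of $\nabla h$ (Assumption~\ref{Ass:global}\ref{Ass:global:i}); the inexact optimality condition combined with $A_{k,I_k-1}\preccurlyeq\overline\nu\,\Id_N$ (Assumption~\ref{Ass:maj_quad}\ref{Ass:maj_quad:ii}); and Remark~\ref{remark:glob}\ref{remark:glob:ii} together with Remark~\ref{remark:glob}\ref{remark:glob:v}, which give $\|r_p(x_{k+1})\|\le\upsilon$ and $|\lambda_{p,k+1}-\lambda_{p,k}|\le\eta\|x_{k+1}-x_k\|$ (applicable because every iterate remains in the compact level set $\lev{f(x_0)}f$ by Remark~\ref{remark:glob}\ref{remark:glob:i} and Proposition~\ref{Prop:decr1}\ref{Prop:decr1:ii}). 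Using $\|x_{k+1}-\widetilde{x}_{k,I_k-1}\|\le\|\chi_k\|$ and $\|x_{k+1}-x_k\|\le\sqrt{\overline I}\,\|\chi_k\|$ (Jensen, as in~\eqref{eq:pr:decr1:jensen}, plus Assumption~\ref{Ass:max_subit}) then produces the announced estimate, with a constant of the form $\overline\beta=\mu+\beta\sqrt{\overline\nu}+P\eta\upsilon\sqrt{\overline I}$.

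The genuinely delicate step is the membership part: the limiting subdifferential has no unconditional sum rule, so the per-$p$ subgradients $\lambda_{p,k+1}\,r_p(x_{k+1})$ cannot be added inside $\partial g$ directly. Descending to the Fr\'echet level---where additivity is automatic and where the inner convexity of the $\psi_p$ is precisely what Proposition~\ref{Lem:subdiff} exploits---is what makes the assembly go through. After that, the bound itself is a routine bookkeeping exercise, whose only subtlety is the simultaneous control of the shifts in the base point ($\widetilde{x}_{k,I_k-1}\to x_{k+1}$) and in the outer weights ($\lambda_{p,k}\to\lambda_{p,k+1}$), both of which fortunately scale with $\|\chi_k\|$ thanks to Assumption~\ref{Ass:max_subit}.
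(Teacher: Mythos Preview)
Your proof is correct and follows the same architecture as the paper's: first establish $t(x_{k+1})\in\partial f(x_{k+1})$ via the subdifferential calculus of Propositions~\ref{Prop:chain-rules} and~\ref{Lem:subdiff}, then bound $\|t(x_{k+1})\|$ by a triangle inequality that separates the gradient shift from the weight shift $\lambda_{p,k}\to\lambda_{p,k+1}$, controlling the latter with Remark~\ref{remark:glob}\ref{remark:glob:ii}--\ref{remark:glob:v}.

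Two refinements in your version are worth recording. For the membership, the paper simply invokes Proposition~\ref{Lem:subdiff} together with Proposition~\ref{Prop:chain-rules}\ref{Prop:chain-rules:ii} to write $\partial g(x_{k+1})=\sum_p\lambda_{p,k+1}\partial\psi_p(x_{k+1})$, leaving implicit how the sum rule is justified for the nonconvex summands $\phi_p\circ\psi_p$; your passage through Fr\'echet subgradients (where the inclusion $\sum_p\widehat\partial g_p\subset\widehat\partial(\sum_p g_p)$ is automatic) makes this step airtight. For the bound, the paper majorizes $\|\nabla h(x_{k+1})+v_k(x_{k+1})\|^2$ by the full sum $\sum_{i=0}^{I_k-1}\|\nabla h(\widetilde x_{k,i+1})+v_k(\widetilde x_{k,i+1})\|^2$ and then bounds every summand, arriving at $\sqrt{2(\mu^2+\overline\nu\beta^2)}\,\|\chi_k\|$; you instead work only with the last inner step $i=I_k-1$, which is both simpler and yields the sharper constant $\mu+\beta\sqrt{\overline\nu}$ for that piece. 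Either route gives an admissible $\overline\beta$, so the difference is purely one of economy.
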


\begin{proof}
Firstly, using the chain rule given in \cref{Prop:chain-rules}\ref{Prop:chain-rules:i}, we have $\partial f(x_{k+1}) = \nabla h(x_{k+1}) + \partial g(x_{k+1})$.
In addition, using \cref{Lem:subdiff} and \cref{Prop:chain-rules}\ref{Prop:chain-rules:ii}, we notice that $\partial g(x_{k+1}) = \sum_{p=1}^P \lambda_{p,k+1} \partial \psi_p(x_{k+1})$ where $\lambda_{p,k+1} =  (\phi_p' \circ \psi_p)(x_{k+1})$. Thus, we can deduce that $t(x_{k+1}) $ as defined in \eqref{prop:critset:subgrad} belongs to $ \partial f(x_{k+1})$.

We now show inequality~\eqref{prop:critset:subgrad-maj}.
Let $v_k(x_{k+1}) \in \partial l_k(x_{k+1})$ be defined as in algorithm~\eqref{algo:FB_MMbis_inexact}, i.e. $v_k(x_{k+1}) = \sum_{p=1}^P \lambda_{p,k} r_p(x_{k+1})$, where, for every $p\in \{1, \ldots,P\}$, $r_p(x_{k+1}) \in \partial \psi_p(x_{k+1})$. 
Using the triangular inequality and \cref{remark:glob}\ref{remark:glob:ii}, we have
\begin{align}
\| t(x_{k+1}) \|
& = 		\|  \nabla h(x_{k+1}) + v_k(x_{k+1})  +  \sum_{p=1}^P \lambda_{p,k+1} r_p(x_{k+1}) - v_k(x_{k+1})  \|	\nonumber \\
& \le		\| \nabla h(x_{k+1})  + v_k(x_{k+1}) \| + \| \sum_{p=1}^P (\lambda_{p,k+1} - \lambda_{p,k}) r_p(x_{k+1}) \|	\nonumber \\
& \le		\| \nabla h(x_{k+1})  + v_k(x_{k+1}) \| 	+  \sum_{p=1}^P  \|\lambda_{p,k+1} - \lambda_{p,k} \| \| r_p(x_{k+1}) \|	\nonumber \\
& \le		\| \nabla h(x_{k+1})  + v_k(x_{k+1}) \| 	+  \sum_{p=1}^P \upsilon  \|\lambda_{p,k+1} - \lambda_{p,k} \| .	\label{pr:cvg:maj-t-x-k+1}
\end{align}
Let, for every $i \in \{0, \ldots, I_k-2\}$, $v_k(\widetilde{x}_{k, i+1}) \in \partial l_k(\widetilde{x}_{k,i+1})$ be defined as in algorithm~\eqref{algo:FB_MMbis_inexact}. 
Then, using the fact that $x_{k+1} =\widetilde{x}_{k,I_k} $ and Jensen's inequality, we have
\begin{align}
&	\| \nabla h(x_{k+1}) + v_k(x_{k+1}) \|	^2	
	\le 	\sum_{i=0}^{I_k-1} \| \nabla h(\widetilde{x}_{k,i+1}) + v_k(\widetilde{x}_{k,i+1}) \|^2	\nonumber	\\
&	\quad\quad\quad\quad
	\le 	2
	        \sum_{i=0}^{I_k-1} \Big( \| \nabla h(\widetilde{x}_{k,i+1}) - \nabla h(\widetilde{x}_{k,i}) \|^2 
			+  \| \nabla h(\widetilde{x}_{k,i}) +  v_k(\widetilde{x}_{k,i+1}) \|^2 \Big)	\nonumber	\\
&	\quad\quad\quad\quad
	\le 	2
	        \sum_{i=0}^{I_k-1} \Big( \mu^2 \| \widetilde{x}_{k,i+1} - \widetilde{x}_{k,i} \|^2
			+ \overline{\nu} \beta^2 \|\widetilde{x}_{k,i+1} - \widetilde{x}_{k,i} \|^2 \Big),
\end{align}
where the last majoration is obtained using \cref{Ass:global}\ref{Ass:global:i}, \cref{Ass:maj_quad}\ref{Ass:maj_quad:ii} and the second inequality condition in algorithm~\eqref{algo:FB_MMbis_inexact}. 
Then, by definition of $\chi_k$, we obtain
\begin{equation}	\label{pr:cv:maj_subdiff}
\| \nabla h(x_{k+1}) + v_k(x_{k+1})  \|	
\le \sqrt{2(\mu^2 + \overline{\nu} \beta^2)} \| \chi _k \|.
\end{equation}
In addition, according to \cref{Ass:global}\ref{Ass:global:ii-iii} and \cref{remark:glob}\ref{remark:glob:v}, since $(x_k)_{k\in \NN}$ belongs to the compact set $E$, for every $(k_1, k_2) \in \NN^2$, we have $\| \lambda_{p,k_1} - \lambda_{p,k_2} \| \le \eta \| x_{k_1} - x_{k_2} \|$. Thus, combining this last inequality with~\eqref{pr:cvg:maj-t-x-k+1} and \eqref{pr:cv:maj_subdiff}, we obtain $\| t(x_{k+1}) \|
\le \sqrt{2(\mu^2 + \overline{\nu} \beta^2)} \| \chi _k \| + \sum_{p=1}^P \upsilon \eta \| x_{k+1}-x_k \|$.
Noticing that $\| x_{k+1}-x_k \| \le \| \chi_k \|$, we obtain $\| t(x_{k+1}) \| \le \big( \sqrt{2(\mu^2 + \overline{\nu} \beta^2)} + \upsilon \eta \big) \| \chi _k \|$.
\end{proof}

\subsection{Convergence results}

Before giving our main convergence result, we need to introduce our last assumption, concerning the Kurdyka-\L ojasiewicz inequality \cite{Attouch_etal_2010, Bolte06, Bolte07, Bolte10, Kurdyka94, Lojasiewicz63}.
To this aim, we introduce an additional notation: 
for $\zeta>0$, we denote by $\Phi_\zeta$ the class of concave and continuous functions $\varphi\colon [0, \zeta[ \to [0, +\infty[$ such that $\varphi(0) = 0$, $\varphi$ is $\mathcal{C}^1$ on $]0, \zeta[$, and $\varphi'(u) >0$ for every $u \in ]0,\zeta[$.

\begin{assumption} \label{Ass:KL}
The function $f = h+g$ satisfies the Kurdyka-\L ojasiewicz inequality, i.e., for every $\overline{x} \in \dom \partial f$, 
there exist $\zeta>0$, a neighbourhood $E$ of $\overline{x}$, and a function $\varphi \in \Phi_\zeta$, such that for every $x \in E$ satisfying $f(\overline{x}) < f(x) < f(\overline{x}) + \zeta$, we have
\begin{equation}\label{Ass:KL:in}
\varphi'\big( f(x) - f(\overline{x}) \big) \operatorname{dist}( 0, \partial f(x) ) >1.
\end{equation}
%
%
\end{assumption}


As emphasized, e.g., in \cite{Attouch_etal_2010, Attouch_Bolte_2011, Bolte14}, the KL inequality is satisfied for a wide class of functions, and in particular by sub-analytic, log-exp ans semi-algebraic functions\footnote{A function is semi-algebraic if its graph is a finite union and intersection of sets defined by a finite number of polynomial equalities and inequalities.}.

In order to use \cref{Ass:KL} to investigate the convergence of sequences \linebreak $(x_k)_{k\in \NN}$ generated by algorithm~\eqref{algo:FB_MMbis_inexact}, we need to use the following property, initially introduced in \cite[Lemma 1]{Attouch_Bolte_2008} and \cite[Lemma 6]{Bolte14}.
\begin{lemma}[Uniformized KL property]	\label{Lemma:KL:Bolte}
Let $\Omega$ be a compact subset of $\RR^N$.
Let $f \colon \RR^N \to ]-\infty, +\infty]$ be a proper and lower-semicontinuous function, constant on $\Omega$ and satisfying the KL inequality on $\Omega$.
Then, there exists  $\zeta>0$, $\kappa>0$ and $\varphi \in \Phi_{\zeta}$ such that, for every $\overline{x} \in \Omega$, and for every $x \in \RR^N$ satisfying 
\begin{equation}	\label{Lemma:KL:Bolte:cond}
\begin{cases}
\operatorname{dist}(x,\Omega)<\kappa,	\\
f(\overline{x})< f(x) < f(\overline{x}) + \zeta,
\end{cases}
\end{equation}
we have $\varphi' \big( f(x) - f(\overline{x}) \big) \operatorname{dist} \big( 0, \partial f (x) \big) \ge 1$.

\end{lemma}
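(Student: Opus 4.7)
\proof[Proof sketch]
The plan is a standard compactness-plus-patching argument: leverage the pointwise KL property at each $\bar{x}\in\Omega$ to produce a finite cover of $\Omega$, then glue the resulting desingularising functions into a single concave function $\varphi$ that works uniformly.

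First, since $f$ is constant on $\Omega$, write $c := f(\bar{x})$ for every $\bar{x}\in\Omega$. For each $\bar{x}\in\Omega\subset\dom\partial f$, Assumption~\ref{Ass:KL} yields a radius $\zeta_{\bar{x}}>0$, an open neighbourhood $E_{\bar{x}}$ of $\bar{x}$ and a function $\varphi_{\bar{x}}\in\Phi_{\zeta_{\bar{x}}}$ such that the KL inequality holds on $E_{\bar{x}}\cap\{c<f<c+\zeta_{\bar{x}}\}$. Shrinking $E_{\bar x}$ if necessary, I may assume $E_{\bar{x}}=B(\bar{x},r_{\bar{x}})$ is an open ball. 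The collection $\{B(\bar{x},r_{\bar{x}}/2)\}_{\bar{x}\in\Omega}$ is an open cover of the compact set $\Omega$, so by the Heine-Borel theorem there is a finite subcover, indexed by $\bar{x}_1,\dots,\bar{x}_M\in\Omega$.

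Next I would choose the uniform constants. Set $\kappa := \min_{1\le i \le M} r_{\bar{x}_i}/2$ and $\zeta := \min_{1\le i\le M}\zeta_{\bar{x}_i}$, both strictly positive. If $x\in\RR^N$ satisfies $\operatorname{dist}(x,\Omega)<\kappa$, then there exists $\bar{x}\in\Omega$ with $\|x-\bar{x}\|<\kappa$, and $\bar{x}\in B(\bar{x}_i,r_{\bar{x}_i}/2)$ for some index $i$, so by the triangle inequality $x\in B(\bar{x}_i,r_{\bar{x}_i})=E_{\bar{x}_i}$. Moreover, if $c<f(x)<c+\zeta$, then $c=f(\bar{x}_i)<f(x)<f(\bar{x}_i)+\zeta_{\bar{x}_i}$, and the pointwise KL inequality at $\bar{x}_i$ gives
\begin{equation*}
\varphi_{\bar{x}_i}'\bigl(f(x)-c\bigr)\,\operatorname{dist}\bigl(0,\partial f(x)\bigr)>1.
\end{equation*}

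Finally, I would glue the $\varphi_{\bar{x}_i}$'s by setting $\varphi := \sum_{i=1}^{M}\varphi_{\bar{x}_i}$ on $[0,\zeta[$. Finite sums of concave, continuous, $\mathcal{C}^1$ functions with value $0$ at $0$ and strictly positive derivative on $]0,\zeta[$ retain all these properties, so $\varphi\in\Phi_{\zeta}$. Because every summand has a nonnegative derivative, $\varphi'(s)\ge\varphi_{\bar{x}_i}'(s)$ for every $s\in\,]0,\zeta[$ and every $i$, whence for any $\bar{x}\in\Omega$ (so $f(\bar{x})=c$) and any $x$ satisfying~\eqref{Lemma:KL:Bolte:cond},
\begin{equation*}
\varphi'\bigl(f(x)-f(\bar{x})\bigr)\,\operatorname{dist}\bigl(0,\partial f(x)\bigr)\ge \varphi_{\bar{x}_i}'\bigl(f(x)-c\bigr)\,\operatorname{dist}\bigl(0,\partial f(x)\bigr)>1\ge 1,
\end{equation*}
proving the claim.

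The only mildly delicate step is the cover-refinement argument that guarantees the uniform radius $\kappa$ does select a neighbourhood $E_{\bar{x}_i}$ containing $x$; beyond that, the proof is purely bookkeeping. No properties of $\varphi$ beyond those already packaged in the class $\Phi_\zeta$ are needed, so the summation construction is the most direct way to uniformise without appealing to $\max$ (which could spoil differentiability).
\endproof
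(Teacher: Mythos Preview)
Your argument is correct and is, in fact, precisely the standard compactness-plus-summation proof of the uniformised KL property. Note, however, that the paper does not supply its own proof of this lemma: it is quoted verbatim from \cite[Lemma~6]{Bolte14} (and \cite[Lemma~1]{Attouch_Bolte_2008}) and used as a black box. Your sketch reproduces the argument of \cite{Bolte14} essentially unchanged---finite subcover of $\Omega$ by half-radius balls, $\kappa$ and $\zeta$ as minima over the finite index set, and $\varphi=\sum_i\varphi_{\bar x_i}$ to preserve concavity and $\mathcal{C}^1$-regularity while dominating each local desingulariser. The triangle-inequality step you flag as ``mildly delicate'' is handled exactly as you wrote it, and the choice of summation over maximum is the right one for the reason you give.

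One small point worth tightening if you were to write this out in full: you implicitly use $\Omega\subset\dom\partial f$ when invoking Assumption~\ref{Ass:KL} at each $\bar x\in\Omega$. In the paper's application (\cref{lemma:characOmega2}) this holds because every point of $\Omega_{x_0}$ is a critical point, hence $0\in\partial f(\bar x)$; but as a stand-alone lemma it should be stated as a hypothesis (as it is in \cite{Bolte14}) rather than asserted.
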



We will now investigate the properties of the limit point set. Let $(x_k)_{k \in \NN}$ be a sequence generated by algorithm~\eqref{algo:FB_MMbis_inexact}, and $x_0 \in \dom g$ be  a starting point. The associated set of limit points is defined as $\Omega_{x_0} = \big\{ \overline{x} \in \RR^N \, | \, \exists \text{ an increasing sequence of}$ $\text{ integers } (k_j)_{k\in \NN} \text{ such that } x_{k_j} \underset{j \to \infty}{\to} \overline{x} \big\}$.

\begin{remark}	\label{lemma:characOmega1}
According to \cite[Lemma 5]{Bolte14} and \cref{Prop:decr1}\ref{Prop:decr1:iii}, we have
\begin{enumerate}
\item		\label{lemma:characOmega1:i}
$\displaystyle  \lim_{k \to \infty} \text{dist } \Big( x_k , \Omega_{x_0} \Big) = 0$,
\item		\label{lemma:characOmega1:ii}
$\Omega_{x_0}$ is a non-empty, compact and connected set.
\end{enumerate}
\end{remark}

In addition, following the same arguments as in \cite[Lemma 5]{Bolte14}, we have the following results.

\begin{lemma}[Properties of the limit point set]	\label{lemma:characOmega2}
Let $(x_k)_{k\in \NN}$ be a sequence generated by algorithm~\eqref{algo:FB_MMbis_inexact} with starting point $x_0 \in \dom g$.
The following assertions hold under Assumptions~\ref{Ass:global}, \ref{Ass:maj_quad}, \ref{Ass:stepsize} and~\ref{Ass:max_subit}.
\begin{enumerate}
\item		\label{lemma:characOmega2:i}
If $x^\star \in \Omega_{x_0}$, then $x^\star$ is a critical point of $f$, i.e. $0 \in \partial f (x^\star)$.
\item		\label{lemma:characOmega2:ii}
The objective function $f$ is finite and constant on $\Omega_{x_0}$.
\end{enumerate}

\end{lemma}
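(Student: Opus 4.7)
The plan is to prove part (ii) first, since its content (the convergence $f(x_k) \to \xi$ together with $f$ being constant on $\Omega_{x_0}$) will supply the ingredient $f(x_{k_j}) \to f(x^\star)$ needed to invoke the closedness of $\graph \partial f$ when proving part (i).

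For (ii), I would first observe that, as noted in the proof of Proposition~\ref{Prop:decr1}\ref{Prop:decr1:ii}, the iterates $(x_k)_{k\in\NN}$ all lie in a compact subset $E$ of $\lev{f(x_0)} f \subset \dom g$, and the sequence $(f(x_k))_{k\in\NN}$ is non-increasing and converges to some $\xi \in \RR$. Since $\Omega_{x_0}$ is the set of cluster points of $(x_k)$, clearly $\Omega_{x_0} \subset E \subset \dom g$, so $f$ is finite on $\Omega_{x_0}$. Now, for any $x^\star \in \Omega_{x_0}$, pick an increasing sequence of indices $(k_j)_{j\in\NN}$ with $x_{k_j} \to x^\star$. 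By Remark~\ref{remark:glob}\ref{remark:glob:i}, $f$ is continuous on $\dom g$, so $f(x_{k_j}) \to f(x^\star)$. Combined with $f(x_{k_j}) \to \xi$, this forces $f(x^\star) = \xi$, establishing that $f$ is constant on $\Omega_{x_0}$.

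For (i), fix $x^\star \in \Omega_{x_0}$ and a subsequence $(x_{k_j})_{j\in\NN}$ converging to $x^\star$. Since $\|x_{k+1} - x_k\| \to 0$ by Proposition~\ref{Prop:decr1}\ref{Prop:decr1:iii}, the shifted subsequence $(x_{k_j+1})_{j\in\NN}$ also converges to $x^\star$. Proposition~\ref{prop:critset} provides, for each $k\in\NN$, an element $t(x_{k+1}) \in \partial f(x_{k+1})$ satisfying $\|t(x_{k+1})\| \le \overline{\beta}\|\chi_k\|$. Applying this along $k = k_j$, and using $\|\chi_{k_j}\| \to 0$ (again from Proposition~\ref{Prop:decr1}\ref{Prop:decr1:iii}), we get $t(x_{k_j+1}) \to 0$. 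From (ii) and continuity of $f$ on $\dom g$, $f(x_{k_j+1}) \to f(x^\star)$. We are now in position to apply the closedness property Theorem~\ref{Thm:Rock:contsub} to the sequence $\bigl(x_{k_j+1}, t(x_{k_j+1})\bigr) \to (x^\star, 0)$, which yields $(x^\star, 0) \in \graph \partial f$, i.e.\ $0 \in \partial f(x^\star)$.

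The only subtle point — and the one I would most carefully check — is that the argument producing $f(x_{k_j}) \to f(x^\star)$ truly goes through: this relies on $x^\star$ lying in $\dom g$ (where $f$ is continuous), which in turn is guaranteed by the coercivity of $f$ and the descent in Proposition~\ref{Prop:decr1}\ref{Prop:decr1:i} confining the orbit to a compact subset of $\dom g$. Everything else is a direct combination of the descent estimate, the subgradient bound, and the closed-graph theorem, in the spirit of \cite{Bolte14}.
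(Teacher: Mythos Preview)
Your proposal is correct and follows essentially the same approach as the paper's own proof: both combine the continuity of $f$ on $\dom g$ (Remark~\ref{remark:glob}\ref{remark:glob:i}) with the convergence of $(f(x_k))_{k\in\NN}$ from Proposition~\ref{Prop:decr1}\ref{Prop:decr1:ii} to identify the limit value on $\Omega_{x_0}$, and then use the subgradient bound of Proposition~\ref{prop:critset} together with $\|\chi_k\|\to 0$ and the closedness property of Theorem~\ref{Thm:Rock:contsub} to conclude $0\in\partial f(x^\star)$. The only cosmetic differences are that the paper proves \ref{lemma:characOmega2:i} first (establishing $f(x_k)\to f(x^\star)$ inside that argument and reusing it for \ref{lemma:characOmega2:ii}), and it applies the closedness theorem directly along $(x_{k_j})$ rather than the shifted subsequence $(x_{k_j+1})$; both variants are equivalent since $\|t(x_k)\|\le\overline{\beta}\|\chi_{k-1}\|\to 0$ along the full sequence.
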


\begin{proof}\
\begin{enumerate}
\item
Let $x^\star$ be a limit point of $(x_k)_{k \in \NN}$. 
On the one hand, since for every $k\in \NN$, $x_k \in \dom g$, and $f$ is continuous on $\dom g$ (see \cref{remark:glob}\ref{remark:glob:i}),  there exists a subsequence $(x_{k_j})_{j\in \NN}$ converging to $x^\star$ such that $f(x_{k_j}) \to x^\star$ as $j\to +\infty$. In addition, since $(f(x_k))_{k\in \NN}$ is a convergent and non-increasing sequence (according to \cref{Prop:decr1}\ref{Prop:decr1:ii}), we have $f(x_k) \to f(x^\star)$ as $k \to \infty$.
On the other hand, using \cref{Prop:decr1}\ref{Prop:decr1:iii} and \cref{prop:critset}, we have $t(x_{k}) \in \partial f(x_{k})$ and $t(x_k) \to 0$ as $k \to \infty$. 
So, using the closedness property of~$\partial f$ given in \cref{Thm:Rock:contsub}, we have $0 \in \partial f(x^\star)$, and thus $x^\star$ is a critical point~of~$f$.

\item
On the one hand, according to \cref{Prop:decr1}\ref{Prop:decr1:ii}, there exist $\xi \in \RR$ such that $f(x_k) $ converges to $\xi$ as $k\to +\infty$. 
On the other hand, let $x^\star \in \Omega_{x_0}$, i.e. there exists a subsequence $(x_{k_j})_{j \in \NN}$ of $(x_k)_{k\in \NN}$ converging to $x^\star$ as $k\to +\infty$. 
As seen in the proof of statement (i), we have $f(x_k) \to f(x^\star)$ as $k \to +\infty$, thus $f(x^\star) = \xi$. 
Therefore, the restriction of $f$ to $\Omega_{x_0}$ is constant, and equal to $\xi$.
\end{enumerate}
\end{proof}



The next theorem is our main convergence result, analysing the convergence of the sequences generated by the C2FB algorithm given in~\eqref{algo:FB_MMbis_inexact}.

\begin{theorem} \label{thm:cvgce}
Let $(x_k)_{k\in \NN}$ be a sequence generated by algorithm~\eqref{algo:FB_MMbis_inexact}. 
Under Assumptions~\ref{Ass:global}, \ref{Ass:maj_quad}, \ref{Ass:stepsize}, \ref{Ass:max_subit} and~\ref{Ass:KL}, the following holds.
\begin{enumerate}
\item
$(x_k)_{k\in \NN}$ has a finite length, i.e. $\displaystyle \sum_{k=0}^{+\infty} \| x_{k+1} - x_k \| < +\infty$.
\item
$(x_k)_{k\in \NN}$ converges to a critical point $x^\star$ of $f$.
\end{enumerate}
\end{theorem}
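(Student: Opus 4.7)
The plan is to follow the now-standard KL-based convergence scheme (in the spirit of \cite{Attouch_Bolte_2011, Bolte14}), combining the descent estimate of \cref{Prop:decr1} with the subgradient bound of \cref{prop:critset} and the uniformized KL property of \cref{Lemma:KL:Bolte}. First, I would treat the trivial case separately: if there exists $k_0 \in \NN$ with $f(x_{k_0}) = \xi$, where $\xi$ is the constant value of $f$ on $\Omega_{x_0}$ (\cref{lemma:characOmega2}\ref{lemma:characOmega2:ii}), then by the non-increasing character of $(f(x_k))_{k\in \NN}$ together with \cref{Prop:decr:f-xk}, we must have $x_k = x_{k_0}$ for all $k \ge k_0$, and both conclusions are immediate. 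So we may assume $f(x_k) > \xi$ for all $k \in \NN$.

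Next, I would invoke \cref{Lemma:KL:Bolte} on the compact set $\Omega = \Omega_{x_0}$ (which is compact and non-empty by \cref{lemma:characOmega1}\ref{lemma:characOmega1:ii}). Since $\operatorname{dist}(x_k, \Omega_{x_0}) \to 0$ and $f(x_k) \searrow \xi$, for all $k$ large enough both conditions in \eqref{Lemma:KL:Bolte:cond} hold, yielding, for some $\varphi \in \Phi_\zeta$,
\begin{equation}
    \varphi'\big(f(x_k) - \xi\big)\, \operatorname{dist}\big(0, \partial f(x_k)\big) \ge 1.
\end{equation}
Combining this with $\operatorname{dist}(0,\partial f(x_k)) \le \|t(x_k)\| \le \overline{\beta}\|\chi_{k-1}\|$ from \cref{prop:critset}, and with the descent bound $f(x_k)-f(x_{k+1}) \ge \overline{\alpha}\|\chi_k\|^2$ from \cref{Prop:decr1}\ref{Prop:decr1:i}, concavity of $\varphi$ gives
\begin{equation}
    \Delta_k - \Delta_{k+1} \;\ge\; \varphi'(f(x_k)-\xi)\,\big(f(x_k)-f(x_{k+1})\big) \;\ge\; \frac{\overline{\alpha}}{\overline{\beta}}\,\frac{\|\chi_k\|^2}{\|\chi_{k-1}\|},
\end{equation}
where $\Delta_k := \varphi(f(x_k)-\xi)$. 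Rearranging and applying the AM-GM inequality $\sqrt{ab} \le \tfrac12(a+b)$ produces
\begin{equation}
    \|\chi_k\| \;\le\; \tfrac12 \|\chi_{k-1}\| + \tfrac{\overline{\beta}}{2\overline{\alpha}}(\Delta_k - \Delta_{k+1}).
\end{equation}

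Summing this telescoping inequality from $k=k_1$ (large enough) to $K$, absorbing $\tfrac12\sum\|\chi_{k-1}\|$ into the left-hand side, and using $\Delta_k \ge 0$, yields $\sum_{k\ge k_1}\|\chi_k\| < +\infty$. Because $\|x_{k+1}-x_k\| \le \sqrt{\overline{I}}\,\|\chi_k\|$ by Jensen's inequality (already used in \eqref{eq:pr:decr1:jensen}) and \cref{Ass:max_subit}, this gives the finite-length assertion $\sum_k\|x_{k+1}-x_k\| < +\infty$. Hence $(x_k)_{k\in \NN}$ is Cauchy and converges to some $x^\star \in \Omega_{x_0}$, which by \cref{lemma:characOmega2}\ref{lemma:characOmega2:i} is a critical point of $f$. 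The main obstacle is the bookkeeping around the KL step: the subgradient estimate bounds $\operatorname{dist}(0,\partial f(x_k))$ via $\|\chi_{k-1}\|$ while the descent controls $\|\chi_k\|$ (one index shift), so the AM-GM step is essential to decouple the two indices and make the telescoping argument go through; handling the initial indices and the ``trivial stationarity'' case cleanly is the only real subtlety.
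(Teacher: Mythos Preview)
Your proposal is correct and follows essentially the same route as the paper's proof: apply the uniformized KL property (\cref{Lemma:KL:Bolte}) on $\Omega_{x_0}$, combine the resulting inequality with the subgradient bound of \cref{prop:critset} (yielding the index-shifted control via $\|\chi_{k-1}\|$) and the descent estimate of \cref{Prop:decr1}, then use concavity of $\varphi$ and the AM--GM inequality to obtain a telescoping bound on $\sum_k\|\chi_k\|$, from which $\sum_k\|x_{k+1}-x_k\|<\infty$ follows via $\|x_{k+1}-x_k\|\le\sqrt{\overline I}\,\|\chi_k\|$. The only difference is cosmetic---your $\Delta_k=\varphi(f(x_k)-\xi)$ versus the paper's $\Delta_k=\varphi(f(x_k)-\xi)-\varphi(f(x_{k+1})-\xi)$---and you explicitly dispose of the trivial stationary case $f(x_{k_0})=\xi$, which the paper tacitly assumes does not occur.
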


\begin{proof}\

\begin{enumerate}
\item
First, in order to apply the result given in \cref{Lemma:KL:Bolte}, we need to show that there exists $k^\star \in \NN$ such that, for every $k > k^\star$, $x_k$ satisfies condition~\eqref{Lemma:KL:Bolte:cond} for $\Omega_{x_0}$.
According to \cref{lemma:characOmega1}\ref{lemma:characOmega1:ii}, $\Omega_{x_0}$ is non-empty. So there exists a convergent subsequence $(x_{k_j})_{j\in \NN}$, i.e. there exists $x^\star$ such that $x_{k_j} \to x^\star$ as $j \to +\infty$. 
In addition, using the same arguments as in the proof of \cref{lemma:characOmega2}, 
we can deduce that $\lim_{k\to +\infty} f(x_k) = f(x^\star)$.
%
On the one hand, $(f(x_k))_{k\in \NN}$ being a convergent and a non-increasing sequence, we have, for every $\zeta>0$, 
\begin{equation}
(\exists \overline{k} \in \NN)(\forall k>\overline{k})\quad
f(x^\star) < f(x_k) < f(x^\star) + \zeta.
\end{equation}
On the other hand, according to \cref{lemma:characOmega1}\ref{lemma:characOmega1:i}, for every $\kappa>0$, there exists $\overline{\overline{k}}\in \NN$ such that, for every $k > \overline{\overline{k}}$, $\operatorname{dist} \big( x_k, \Omega_{x_0} \big) < \kappa$.
So, there exists~$k^\star = \max \{ \overline{k}, \overline{\overline{k}} \}$ such that, for every $k>k^\star$, $x_k$ satisfies condition~\eqref{Lemma:KL:Bolte:cond}.
According to \cref{lemma:characOmega1}\ref{lemma:characOmega1:ii}, $\Omega_{x_0}$ is compact, and using \cref{lemma:characOmega2}\ref{lemma:characOmega2:ii}, $f$ is constant on $\Omega_{x_0}$, thus we can apply \cref{Lemma:KL:Bolte}. Then, for every $k>k^\star$, we have
\begin{equation}
\varphi' \Big( f(x_k) - f(x^\star) \Big) \operatorname{dist} \big( 0, \partial f(x_k) \big) \ge 1.
\end{equation}
Combining this inequality with \cref{prop:critset}, we obtain
\begin{align}	\label{pr:thm:maj-subdiff}
\varphi' \Big( f(x_k) - f(x^\star) \Big)
\ge \|t(x_{k})\|^{-1} 	
&\ge \overline{\beta}^{-1} \|\chi_{k-1} \|^{-1}.
\end{align}
In addition, since $\varphi$ is concave, we have, for every $(u_1,u_2) \in [0,+\zeta[^2$, $\varphi(u_1) - \varphi(u_2) \ge \varphi'(u_1) (u_1 - u_2)$. Then, by taking $u_1 = f(x_k) - f(x^\star)$ and $u_2 = f(x_{k+1}) - f(x^\star)$, we obtain
\begin{multline}
\varphi \Big( f(x_k) - f(x^\star) \Big) - \varphi \Big( f(x_{k+1}) - f(x^\star) \Big) 	\\
\ge \varphi' \Big( f(x_k) - f(x^\star) \Big) \Big(f(x_k) - f(x_{k+1}) \Big).	
\end{multline}
Let $\Delta_k = \varphi \Big( f(x_k) - f(x^\star) \Big) - \varphi \Big( f(x_{k+1}) - f(x^\star) \Big)$. 
Combining the last two inequalities leads to $f(x_k) - f(x_{k+1}) \le 	\overline{\beta} \| \chi_{k-1} \| \Delta_k$. In addition, using \cref{Prop:decr1}\ref{Prop:decr1:i}, we obtain $\overline{\alpha} \| \chi_k \|^2  \le 	f(x_k) - f(x_{k+1})$.
Thus,
\begin{equation}
\| \chi_k \| 
\le  \Big(\overline{\alpha}^{-1} \overline{\beta} \Delta_k  \| \chi_{k-1} \| \Big)^{1/2}
\le \frac12   \Big( {\overline{\alpha}^{-1}\overline{\beta}}  \Delta_k + \|  \chi_{k-1} \| \Big),
\end{equation}
where the last majoration is obtained using the fact that, for every $(u_1, u_2) \in [0,+\infty[^2$, $\sqrt{u_1 u_2} \le (u_1 + u_2)/2$.

Summing the last inequality from $k^\star+1$ to $K>k^*$, we obtain
\begin{align} \label{eq:pr:thm:maj_chik}
2 \sum_{k=k^\star+1}^K \| \chi_k \| 
&\le 	{\overline{\alpha}^{-1} \overline{\beta}} \sum_{k=k^\star+1}^K  \Delta_k + \sum_{k=k^\star+1}^K \| \chi_{k-1} \|		\nonumber		\\
&\le 	{\overline{\alpha}^{-1} \overline{\beta}} \sum_{k=k^\star+1}^K   \Delta_k + \| \chi_{k^*} \| + \sum_{k=k^\star+1}^K \| \chi_k \| .
\end{align}
In addition, we have
\begin{align}	\label{eq:pr:thm:maj_chik2}
\sum_{k=k^\star+1}^K  \Delta_k
&= 	\varphi \Big( f(x_{k^*+1}) - f(x^\star) \Big)
		- \varphi \Big( f(x_{K+1}) - f(x^\star) \Big)		\nonumber	\\
&\le 	\varphi \Big( f(x_{k^*+1}) - f(x^\star) \Big),
\end{align}
where the last inequality is obtained using the fact that, for every $u\in [0,\zeta[$, $\varphi(u) \ge 0$. 
Therefore, combining \eqref{eq:pr:thm:maj_chik} and \eqref{eq:pr:thm:maj_chik2}, we obtain $\sum_{k=k^\star+1}^K \| \chi_k \| \le \overline{\alpha}^{-1} \overline{\beta} \varphi \Big( f(x_{k^*+1}) - f(x^\star) \Big) + \| \chi_{k^*} \|$.
Since $ (x_{k+1}-x_k) = \sum_{i=0}^{I_k-1} (\widetilde{x}_{k,i+1} - \widetilde{x}_{k,i}) $, $\chi_k = (\widetilde{x}_{k,i+1} - \widetilde{x}_{k,i})_{0 \le i \le I_k-1}$, using Jensen's inequality as in \eqref{eq:pr:decr1:jensen} and \cref{Ass:max_subit} we have $\| x_{k+1} - x_k \| \le \sqrt{\overline{I}} \| \chi_k \|$. So we conclude that $(x_k)_{k\in \NN}$ has finite length.

\item
Since $(x_k)_{k\in \NN}$ has finite length, it is a Cauchy sequence, hence converging to a point $x^\star \in \Omega_{x_0}$. Then the result follows directly from \cref{lemma:characOmega2}\ref{lemma:characOmega2:i}.

\end{enumerate}

\end{proof}

\begin{remark}
The proof of \cref{thm:cvgce} is very similar to the one of \cite[Theorem~1]{Bolte14}. However, the later cannot directly be applied to algorithm~\eqref{algo:FB_MMbis_inexact} due to the sub-iterations on $i\in \{0, \ldots, I_k-1\}$, where the majorant function of $g$ is fixed. 
Precisely, in \eqref{pr:thm:maj-subdiff}, the bound used on the sub-gradient of $f$ at $x_{k}$ depends on the sub-iterates $(\widetilde{x}_{k,i})_{0 \le i \le I_k}$ (see \cref{prop:critset}). This prevents the use of the proof of \cite[Theorem~1]{Bolte14} that only relies on the main sequence $(x_k)_{k\in \NN}$.
\end{remark}

\section{Particular cases of the proposed method}
\label{Sec:examples}

In this section we describe particular cases for the proposed C2FB algorithm, paying attention to the assumptions necessary to ensure that the convergence of Theorem~\ref{thm:cvgce} holds.

\subsection{Variable Metric Forward Backward algorithm}

The proposed algorithm boils down to the VMFB developed in \cite{Chouzenoux13} when choosing, for every $k\in \NN$, $I_k=1$, $P=1$, $\phi_1 = \operatorname{Id}$, and $\psi_1 = \psi$ being a proper and Lipschitz-continuous function.
In this case, we have $g= \psi$ and 
the global minimization problem is of the form
\begin{equation}
\minimize{x \in \RR^N} h(x) + \psi(x).
\end{equation}
In this particular case, we have, for every $x \in \RR^N$, $q(x,x_k) = \psi(x)$,
and consequently, algorithms~\eqref{algo:FB_MM} reduces to
\begin{equation}	\label{algo:basic_VMFB}
\begin{array}{l}
x_0 \in \dom g,	\\
\text{for } k = 0,1, \ldots	\\
\left\lfloor
\begin{array}{l}
\displaystyle x_{k+1} = \prox_{\psi}^{\gamma_k^{-1} A_k} ( x_k - \gamma_k A_k^{-1} \nabla h(x_k)).
\end{array}
\right.
\end{array}
\end{equation}
We would emphasize that the assumptions on $\psi$ to ensure convergence of \eqref{algo:basic_VMFB} are different than in \cite{Chouzenoux13}. 
On the one hand, in \cite{Chouzenoux13} the function $\psi$ only needs to be continuous on its domain (while in Assumption~\ref{Ass:global:ii} we assume that $\psi$ is Lipschitz-continuous on its domain). 
Other works present convergence results for algorithm~\eqref{algo:basic_VMFB} when $\psi$ is non-convex either in the case without variable metric \cite{Attouch_Bolte_2011} or for alternating minimization \cite{Chouzenoux_2016}. 
On the other hand, in \cite{Chouzenoux13} the step-sizes $(\gamma_k)_{k\in \NN}$ are chosen such that, there exists $(\underline{\gamma}, \overline{\gamma}) \in ]0,+\infty[^2$ such that, for every $k\in \NN$, $\underline{\gamma} \le \gamma_k \le 2-\overline{\gamma}$, while in Assumption~\ref{Ass:stepsize} it is assumed that $\underline{\gamma} \le \gamma_k \le 1-\overline{\gamma}$.

\subsection{Reweighted algorithms}

The second example is interesting in computational imaging since it is related to iteratively reweighted algorithms \cite{Ochs2015siam}. 

We consider problem~\eqref{prob:min}-\eqref{def:crit} with, for every $p\in \{1, \ldots, P\}$, $\psi_p$ satisfying Assumption~\ref{Ass:global} and, for every $u \in [0,+\infty[$, $\phi_p(u) = \theta \log( u + \varepsilon)$,
where $\theta>0$ and $\varepsilon>0$. In this case, we have 
\begin{equation}
(\forall x \in \RR^N) \quad
\displaystyle g(x) = \theta \sum_{p=1}^P  \log( \psi_p(x) + \varepsilon),
\end{equation}
and, for every $k \in \NN$ and $p \in \{1, \ldots, P\}$, $q_p(x,x_k) =  \theta \log( \psi_p(x_k) + \varepsilon) +  \lambda_{p,k} \big( \psi_p(x) - \psi_p(x_k) \big)$,
with $\lambda_{p,k} = \theta  {\big( \psi_p(x_k)+\varepsilon \big)^{-1}}$.

In this context, the proposed C2FB algorithm given in \eqref{algo:FB_MMbis} reduces to
\begin{equation}	\label{algo:rw_FB}
\begin{array}{l}
x_0 \in \dom g,	\\
\text{for } k = 0,1, \ldots	\\
\left\lfloor
\begin{array}{l}	
\text{for } p = 1, \ldots, P	\\
\left\lfloor
\begin{array}{l}	
\displaystyle \lambda_{p,k} = \theta \big( \psi_p(x_k)+\varepsilon \big)^{-1},	\\[0.1cm]
\end{array}
\right. \\[0.2cm]
\displaystyle \widetilde{x}_{k,0} = x_k,	\\
\text{for } i = 0, \ldots, I_k-1	\\
\left\lfloor
\begin{array}{l}
\displaystyle \widetilde{x}_{k,i+1} = \prox_{l_k}^{\gamma_{k,i}^{-1} A_{k,i}} ( \widetilde{x}_{k,i} - \gamma_{k,i} A_{k,i}^{-1} \nabla h(\widetilde{x}_{k,i})),
\end{array}
\right.	\\[0.1cm]
\displaystyle x_{k+1} = \widetilde{x}_{k,I_k}.
\end{array}
\right.
\end{array}
\end{equation}

\subsubsection{Reweighted $\ell_1$ algorithm for log-sum penalization}
\label{Sssec:logsum}

Let us consider the following particular case. For every $p\in \{1, \ldots, P\}$, 
\begin{equation}    \label{eq:rwl1:psin}
    (\forall x \in \RR^N)\quad
    \psi_p(x) = | [Wx]^{(p)} |,
\end{equation}
where $W \colon \RR^N \to \RR^P$ is a linear operator (e.g. wavelet transform \cite{Mallat_book}), and $[\cdot]^{(p)}$ denotes the $p$-th component of its argument. Then we have, for every $k\in \NN$, $\lambda_{p,k} = {\theta}{\big( |[Wx_k]^{(p)}| + \varepsilon \big)^{-1}}$.
In this context, the function $g$ corresponds to a log-sum penalization composed with a linear operator:
\begin{equation}    \label{def:log-sum}
    (\forall x \in \RR^N) \quad
    g(x) = \sum_{p=1}^P \theta \log \big( |[Wx]^{(p)}| + \varepsilon \big),
\end{equation}
and
algorithm~\eqref{algo:rw_FB} reduces to the re-weighted $\ell_1$ algorithm, initially proposed in \cite{candes2006compressive} for $W=\Id_N$, where each sub-problem is solved using a VMFB algorithm. 
Let $\operatorname{Diag}(\cdot)$ be the operator giving the diagonal matrix whose diagonal elements are given by its argument.
The resulting algorithm reads
\begin{equation}	\label{algo:rL1_FB}
\begin{array}{l}
x_0 \in \dom g,	\\
\text{for } k = 0,1, \ldots	\\
\left\lfloor
\begin{array}{l}
\Lambda_k = \operatorname{Diag} \left( \Big( \frac{\theta}{|[Wx_k]^{(p)}| + \varepsilon} \Big)_{1 \le p \le P} \right), \\[0.2cm]
\displaystyle \widetilde{x}_{k,0} = x_k,	\\
\text{for } i = 0, \ldots, I_k-1	\\
\left\lfloor
\begin{array}{l}
\displaystyle \widetilde{x}_{k,i+1} = \prox_{\|\Lambda_k W \; \cdot \|_1}^{\gamma_{k,i}^{-1} A_{k,i}} ( \widetilde{x}_{k,i} - \gamma_{k,i} A_{k,i}^{-1} \nabla h(\widetilde{x}_{k,i})),
\end{array}
\right.	\\[0.1cm]
\displaystyle x_{k+1} = \widetilde{x}_{k,I_k}.
\end{array}
\right.
\end{array}
\end{equation}

Note that \eqref{def:log-sum} with $W = \Id_n$ is known as the log-sum penalization and its proximity operator has an explicit form \cite{Chierchia_proxrep}. Let $A= \operatorname{Diag} \big( (a^{(n)})_{1 \le n \le N}\big) \in [0,+\infty[^{N \times N}$ be a diagonal SDP matrix. Then we have 
\begin{multline}    \label{eq:prox_logsum}
    (\forall x \in \RR^N)\quad
    \prox_{g}^{A}(x) = (\overline{x}^{(n)})_{1 \le n \le N} 
    \quad \text{where} \quad
    (\forall n \in \{1, \ldots,N\})\\
    \overline{x}^{(n)} = 
    \begin{cases}
    0,  &   \text{if } |x^{(n)}| < \sqrt{4a^{(n)}} - \varepsilon,   \\
    \max\Big\{ 0, \sign(x^{(n)}) \frac{|x^{(n)}| - \varepsilon + \sqrt{(|x^{(n)}|+\varepsilon)^2 - 4a^{(n)}}}{2} \Big\},
        &   \text{if } |x^{(n)}| = \sqrt{4a^{(n)}} - \varepsilon,   \\
    \sign(x^{(n)}) \frac{|x^{(n)}| - \varepsilon + \sqrt{(|x^{(n)}|+\varepsilon)^2 - 4a^{(n)}}}{2},
        &   \text{otherwise}.
    \end{cases}
\end{multline}
Consequently, in this context, one can apply the classic VMFB algorithm, for diagonal matrices $(A_k)_{k\in \NN}$, directly to minimize $f $. 


\subsubsection{Cauchy penalization}

Similarly to the reweighting $\ell_1$ algorithm described above, another particular case is when choosing, for every $p\in \{1, \ldots,P\}$ and $x \in \RR^N$, $\psi_p(x) = \big( [Wx]^{(p)} \big)^2$, where $W \colon \RR^N \to \RR^P$ is a linear operator.
In this case, $g$ corresponds to the Cauchy penalization composed with a linear operator:
\begin{equation}
    (\forall x \in \RR^N)\quad
    g(x) =  \sum_{p=1}^P \theta \log \Big( \big( [Wx]^{(p)} \big)^2 + \varepsilon \Big),
\end{equation}
and, for every $k\in \NN$ and $p \in \{1, \ldots, P\}$, the weights in algorithm~\eqref{algo:rw_FB} are given by $\lambda_{p,k} = \theta {\big( ([Wx]^{(p)} )^2 + \varepsilon \big)^{-1}}$.


\subsection{Nonconvex norms} \label{Ssec:lpnorm}

In this section we show that our method can be used to handle nonconvex $\ell_\rho^\rho$-norms, where $\rho \in ]0,1[$, defined by
\begin{equation}    \label{def:lp}
    (\forall x \in \RR^N)\quad
    \ell_\rho(x)^\rho =  \sum_{n=1}^N |x^{(n)}|^\rho .
\end{equation}
The proximity operator (relative to the Euclidean norm) of $\ell_\rho^\rho$, relative to the metric induced by the diagonal matrix $A=\text{Diag}\big( (a^{(n)})_{1 \le n \le N} \big)$, has an explicit formula, however it necessitates to find roots of polynomial equations \cite{Bredies2008}: 
\begin{multline}    \label{eq:prox_lp}
    (\forall x \in \RR^N)\quad
    \prox^A_{ \ell_\rho^\rho}({x}) = (\overline{x}^{(n)})_{1 \le n \le N} \\
    \text{where} \quad
    (\forall n \in \{1, \ldots,N\})\quad
    \overline{x}^{(n)}
    = \begin{cases}
    0, & \text{if }  |x^{(n)}|^{\rho-2} > \frac{a^{(n)}}{2-\rho} \Big( 2 \frac{1-\rho}{2-\rho} \Big)^{1-\rho} \\
    t^{(n)} x^{(n)}, &   \text{otherwise,}
    \end{cases}
\end{multline}
where $t^{(n)}>0$ is such that $(a^{(n)})^{-1} |x|^{\rho-2} (\rho-1) (t^{(n)})^{\rho-1} + t -1 = 0$. In practice $(t^{(n)})_{1 \le n \le N}$ is found approximately using the Newton method \cite{Chierchia_proxrep}.

The proposed approach cannot handle directly the $\ell_\rho^\rho$ function, and we need to introduce an approximation of it. Indeed, to define the $\ell_\rho^\rho$ norm as a composition of functions, we take, for every $p\in \{1, \ldots,P\}$, $\psi_p$ to be the function defined in \eqref{eq:rwl1:psin}. In this case, to obtain the $\ell_\rho^\rho$ function defined in \eqref{def:lp}, we should choose, for every $p\in \{1 , \ldots, P\}$ and $u\in [0,+\infty[$, $\phi_p(u) = \theta u^\rho $, where $\theta>0$ is a regularization parameter. However this function is not differentiable at $0$, hence violating \cref{Ass:global}\ref{Ass:global:iii}. Instead we propose to choose $\phi_p(u) = \theta \big( (u+\varepsilon)^\rho - \varepsilon^\rho \big)$, where $\varepsilon>0$. 
Then, $g$ is chosen to be a slightly modified version of the nonconvex $\ell_\rho$-norm, denoted by $\widetilde{\ell}_{\rho,\varepsilon}$, composed with a linear operator $W \colon \RR^N \to \RR^P$:
\begin{equation}    \label{def:mod_ellp}
    (\forall x \in \RR^N)\quad
    g(x) = \theta \widetilde{\ell}_{\rho,\varepsilon}(Wx)^\rho =  \theta \sum_{p=1}^P \Big( \big(|[Wx]^{(p)}| + \varepsilon\big)^\rho - \varepsilon^\rho \Big).
\end{equation}
In this context, for every $p \in \{1, \ldots,P\}$, $k\in \NN$ and $x \in \RR^N$, we have $q_p(x,x_k) = \theta \Big( \big( |[W x_k]^{(p)}| + \varepsilon\big)^\rho - \varepsilon^\rho \Big) + \lambda_{p,k} \Big( |[W x]^{(p)}|- |[W x_k]^{(p)}| \Big)$, 
where $\lambda_{p,k} = \theta \rho \big( |[W x_k]^{(p)}| + \varepsilon \big)^{\rho-1}$.
Therefore, the proposed algorithm given in \eqref{algo:FB_MMbis} reduces to
\begin{equation}	\label{algo:lp_FB}
\begin{array}{l}
x_0 \in \dom g,	\\
\text{for } k = 0,1, \ldots	\\
\left\lfloor
\begin{array}{l}	
\Lambda_k = \text{Diag} \Big( \big( \theta \rho ( |[Wx_k]^{(p)}| + \varepsilon )^{\rho - 1} \big)_{1 \le p \le P} \Big)   \\
\displaystyle \widetilde{x}_{k,0} = x_k,	\\
\text{for } i = 0, \ldots, I_k-1	\\
\left\lfloor
\begin{array}{l}
\displaystyle \widetilde{x}_{k,i+1} = \prox_{\| \Lambda_k W \cdot \|_1}^{\gamma_{k,i}^{-1} A_{k,i}} ( \widetilde{x}_{k,i} - \gamma_{k,i} A_{k,i}^{-1} \nabla h(\widetilde{x}_{k,i})),
\end{array}
\right.	\\[0.1cm]
\displaystyle x_{k+1} = \widetilde{x}_{k,I_k}.
\end{array}
\right.
\end{array}
\end{equation}


\section{Simulations}
\label{Sec:Simuls}

Many imaging problems such as reconstruction, restoration, inpainting, etc., can be formulated as inverse problems. In this context, the objective is to find an estimate $x^\star \in \RR^N$ of an original unknown image $\overline{x} \in \RR^N$ from degraded observations $y\in \RR^M$, given by $y=H\overline{x}+b$,
where $H \colon \RR^N \to \RR^M $ is a linear observation operator, and $b\in \RR^M$ is a realization of an additive independent identically distributed (i.i.d.) random noise. When the random noise is normally distributed, with zero-mean, a common approach to find $x^\star$ is to define it as the minimizer of a penalized least-squares criterion, i.e. solve \eqref{prob:min} with, for every $x \in \RR^N$, $h(x) = \frac12 \| Hx - y \|^2$.

We consider a restoration example in image processing. Precisely, we choose $\overline{x}$ to be the image \texttt{jetplane} of size $N = 256 \times 256$ shown in Figure~\ref{Fig:simul_pb}, and $H \in [0,+\infty[^{N \times N}$ to model a blurring operator. In this case, $H$ is implemented as a convolution operator such that the image is convolved with a motion blurring kernel of length 5 and angle $60^\circ$. 
The noisy observation is then obtained building $b$ as a realization of an i.i.d. Gaussian variable with zero-mean and standard deviation $\sigma>0$. In our simulations we will consider two different noise levels, defined through the input signal-to-noise ratio (iSNR): $\text{iSNR} = \log_{10} \| H \overline{x} \|^2 / (N \sigma^2)$.
Precisely, we will consider the cases when iSNR~$=20$~dB and iSNR~$=25$~dB. In these cases, the SNR of the observed images $y$ are equal approximately to $18$~dB and $21$~dB, respectively. For each case, we run simulations for 50 realizations of random noise.
In Figure~\ref{Fig:simul_pb} are shown the blurred image $H\overline{x} \in \RR^N$ and an example of a noisy observation $y \in \RR^N$, when iSNR~$=25$~dB. 

For the matrices $(A_{k,i})_{k \in \NN, \, 0 \le i \le I_k-1}$, since the preconditioning scheme has been already widely discussed in the literature (see e.g. \cite{Chouzenoux13, Chouzenoux_2016, Combettes_Vu_2014, Repetti_SPL_2015}), we will not investigate this functionality in our simulations. 
In our simulations, we propose to use the same approach as proposed in \cite{Chouzenoux13}. In this context, the matrices are diagonal, fixed over iterations, and can be seen as diagonal approximation of the Hessian of $h$. 

We will consider two different regularization terms: the log-sum penalization and the $\ell_\rho^\rho$ penalization described in sections~\ref{Sssec:logsum} and \ref{Ssec:lpnorm}, respectively. 
For the two considered penalization terms we will provide reconstruction results obtained using a classic VMFB algorithm \cite{Chouzenoux13}\footnote{Note that the VMFB algorithm does not have convergence guaranties when considering $g$ to be a non-convex function \cite{Chouzenoux13}, unlike the FB algorithm \cite{Attouch_Bolte_2011}. Nevertheless, the convergence guaranties can be deduced from \cite{Chouzenoux_2016}, for standard numerical implementations of the log function.} and the proposed C2FB algorithm. 
Since C2FB is proved to converge for any value of $I_k \in \NN^*$, we will consider different values of $I_k \equiv I$, keeping it fixed over iterations. 
As explained with equation~\eqref{algo:FB_MM_approx}, when $I$ is chosen large enough (i.e. when the inner-loop has converged), C2FB has a similar behaviour as state-of-the-art methods to minimize composite functions (see e.g. \cite{Drusvyatskiy2016, Geiping2018, Ochs2015siam, Ochs2018JOTA}). 
%
%
%
For both the methods we will investigate the convergence behaviour, the reconstruction quality of the estimate, and the convergence speed. 
For the convergence behaviour, we will compare, for each experiment, the value of the objective function at convergence. Precisely, we will evaluate $C(x^\star_{VMFB}, x^\star_{C2FB}) :=  \Big( f(x^\star_{VMFB}) - f(x^\star_{C2FB}) \Big) / |f(x^\star_{VMFB})|$, where $x^\star_{VMFB}$ and $x^\star_{C2FB}$ are the estimates obtained with VMFB and C2FB, respectively. If this criteria is positive, it means that our method reached a better critical point than VMFB, i.e. $ f(x^\star_{VMFB}) > f(x^\star_{C2FB}) $.
To evaluate the reconstruction quality of the estimate, we use the signal-to-noise ratio (SNR), which is defined, for an image $x \in \RR^N$ as $\text{SNR} = 10 \log_{10} \Big( \|\overline{x}\|^2 / \|\overline{x}-x\|^2 \Big)$.
Finally, we will investigate the convergence speed of both the methods, in terms of number of iterations needed to reach convergence.
We consider that both the algorithms have converged when the following stopping criteria are fulfilled:
\begin{equation}    \label{eq:stop_crit}
    \begin{cases}
    \| x_k - x_{k+1} \| < 10^{-6} \| x_{k+1} \|,   \\
    \| f(x_k) - f(x_{k+1}) \| < 10^{-5} \| f(x_{k+1}) \|,
    \end{cases}
\end{equation}
where $(x_k)_{k\in \NN}$ is the sequence generated either by VMFB or by the proposed C2FB. It is important to emphasize that, when the computation of the proximity operator does not require sub-iterations, one iteration of VMFB has similar computational cost as one inner-iteration of C2FB. 
For the three above-mentioned evaluation criteria, we will show that the proposed approach leads to better results than the VMFB algorithm. 
In addition, choosing different values of $I$ (number of inner-iterations) will allow us to show that there is an optimal value for $I$ in terms of convergence speed and reconstruction quality, suggesting that there is no need for reaching convergence in the inner-iterations before recomputing the majorant function.

\begin{figure}
    \centering
    \includegraphics[width=3cm]{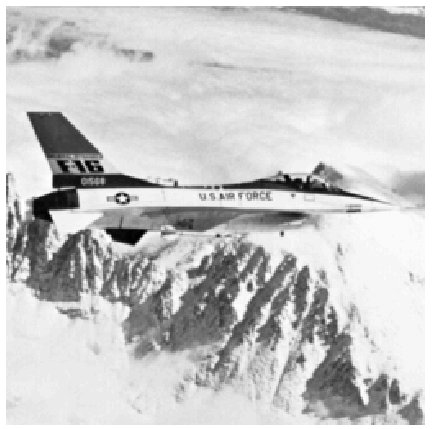}
    \includegraphics[width=3cm]{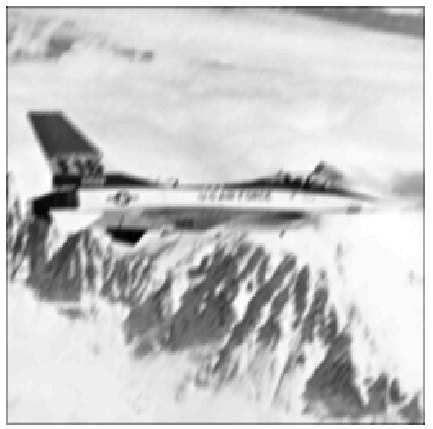}
    \includegraphics[width=3cm]{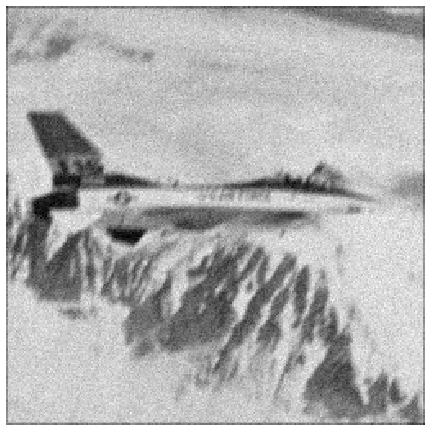}

    \vspace*{-0.3cm}
    
    \caption{\label{Fig:simul_pb}\footnotesize 
    Image \texttt{jetplane} used for the simulations. From left to right: Original unknown image $\overline{x}$, blurred image $H\overline{x}$, and noisy observation $y$ for an iSNR of $25$ dB. The SNR between the original image $\overline{x}$ and the observations $y$ is equal to $20.9$ dB.
    }
\end{figure}

\subsection{Log-sum penalization}
\label{Ssec:Simuls:RWl1}

In this section we present the simulation results obtained when solving the problem described above, using the log-sum regularization described in section~\ref{Sssec:logsum}. More precisely, we propose to 
\begin{equation}	\label{sim:pb:log-sum}
    \minimize{x\in \RR^N} \Big\{ f(x) := \frac12 \| Hx - y \|^2 + \theta \sum_{n=1}^N \log( |[Wx]^{(n)}| + \varepsilon) \Big\},
\end{equation}
where $W \colon \RR^N \to \RR^N$ models the Db8 wavelet transform \cite{Daubechies1998} with $4$ decomposition levels, and $\theta>0$ and $\varepsilon>0$ are chosen to maximize the reconstruction quality. In our experiments we have $(\theta,\varepsilon) = (10^8, 10^{-5})$ (resp. $(\theta,\varepsilon) = (3\times 10^8, 10^{-5})$) for iSNR~$=25$~dB (resp. $20$~dB). 
When \eqref{sim:pb:log-sum} is solved with the VMFB algorithm, we use the proximity operator of the log-sum function given in \eqref{eq:prox_logsum}.
When solved by our C2FB algorithm, we use algorithm~\eqref{algo:rL1_FB}, with  $I_k \equiv I \in \{2, \ldots, 250\}$.

Results are given in Figure~\ref{Fig:results_RL1}, considering (top row) iSNR~$=20$~dB and (bottom row) iSNR~$=25$~dB. 
The left plots give $C(x^\star_{VMFB}, x^\star_{C2FB})$ as a function of $I$. 
For the middle and right plots, the blue curves are obtained with C2FB, and the red curves are obtained considering VMFB, computing exactly the proximity operator.
The middle plots give the SNR values as a function of $I$; and the right plots show the total number of iterations needed to reach convergence as a function of $I$. For C2FB, the total number of iterations is given by $K^\star \times I$, where $K^\star$ is the number of outer iterations computed in algorithm~\eqref{algo:rL1_FB} to satisfy the stopping criteria~\eqref{eq:stop_crit}.
For VMFB, the curves are constant as there is no inner-loop in the algorithm. 
For both the methods, the continuous lines represent the average values, and the dotted lines show the associated results within 1 standard deviation around the mean. 

For both the considered noise levels, we observe that $C(x^\star_{VMFB}, x^\star_{C2FB})$ increases with $I$ and is always positive, showing that C2FB provides a better critical point. 
In addition, C2FB leads to better reconstruction results in terms of SNR than VMFB. When iSNR~$=20$~dB (resp. iSNR~$=25$~dB), the SNR obtained with C2FB is $\approx 22$~dB (resp. SNR~$\approx 23.6$~dB) when $I\ge 5$ (resp. $I \ge 10$), while VMFB leads to results with SNR~$=11$~dB (resp. SNR~$=15.8$~dB). 
In addition, C2FB necessitates less global iterations to reach convergence, and we observe that there is an optimal value for $I$ in terms of total iteration number. Precisely, when iSNR~$=20$~dB (resp. iSNR~$=25$~dB), the optimal value is around $I=15$ (resp. $I=60$), for a total number of iterations of $K^\star \times I = 165$ (resp. $K^\star \times I = 635$). In comparison, VMFB necessitates $\sim 2000$ iterations to converge in both the cases. This observation suggests that reducing the number of iterations in the inner-loop (instead of reaching convergence in each inner-loop before re-computing the weights, as suggested in classical reweighting $\ell_1$ algorithms \cite{candes2006compressive}) can accelerate the convergence of the reweighting $\ell_1$ algorithm without altering the reconstruction quality.

\begin{figure}
    \centering
    \begin{tabular}{c @{} c @{} c}
        \includegraphics[width=4.0cm]{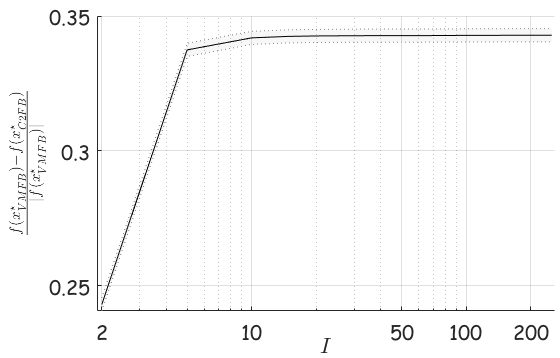}
    &   \includegraphics[width=4.0cm]{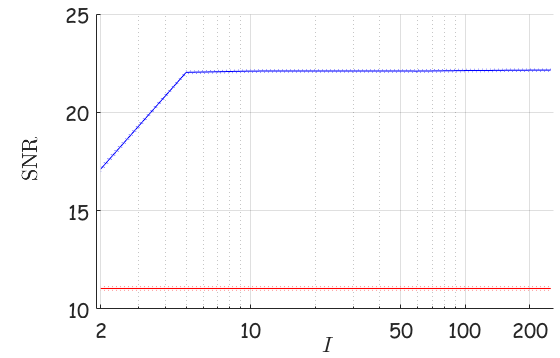}  
    &   \includegraphics[width=4.0cm]{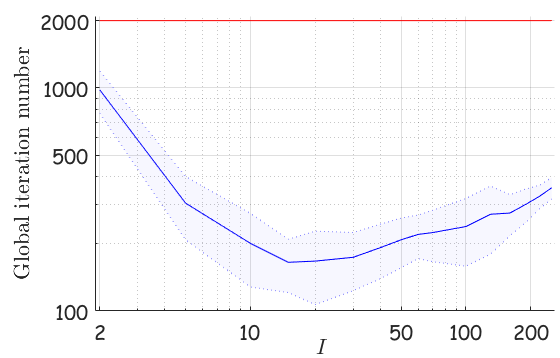}   \\[-0.1cm]
        \includegraphics[width=4.0cm]{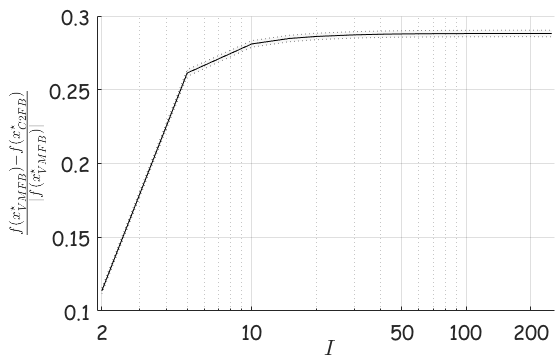}
    &   \includegraphics[width=4.0cm]{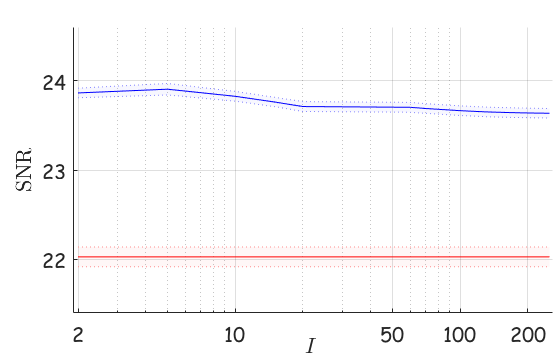}   
    &   \includegraphics[width=4.0cm]{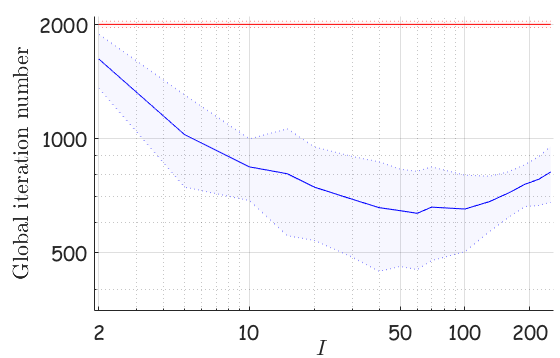}
    \end{tabular}
    
    \vspace{-0.4cm}
    
    \caption{\label{Fig:results_RL1}\footnotesize 
    Results for the log-sum penalization, considering a noise level with (top) iSNR $= 20$ dB (SNR of $y$ $ \approx 18$~dB), and (bottom) iSNR $= 25$ dB (SNR of $y$ $ \approx 21$~dB). 
    Comparison between the proposed reweighting $\ell_1$ algorithm~\eqref{algo:rL1_FB} considering different fixed numbers of iterations in the inner-loop $I_k \equiv I \in \{2, \ldots, 250\}$ (blue curves, using a log scale for the horizontal axis), and the VMFB algorithm with the exact proximity operator of the log-sum penalization computed as per \eqref{eq:prox_logsum} (red curves). 
    The red curves are constant since when the proximity operator is computed exactly, there is no inner-iterations for the reweighting.
    From left to right: 
    values of the objective function at convergence $f(x^\star)$ (linear scale);
	SNR values in dB (linear scale); 
	and global number of iterations needed to reach convergence (log scale). 
    For the proposed method, the global number of iterations needed to reach convergence corresponds to $K^\star \times I$, where $K^\star \in \NN$ is the number outer-iterations in algorithm~\eqref{algo:rL1_FB}. 
    The continuous lines are the average values obtained over 50 realizations of random noise, and the dotted lines show the associated results within 1 standard deviation around the mean. 
    }
\end{figure}

\subsection{$\ell_\rho^\rho$ penalization}
\label{Ssec:Simuls:lp}

\begin{figure}
    \centering
    \begin{tabular}{c @{} c @{} c}
        \includegraphics[width=4.0cm]{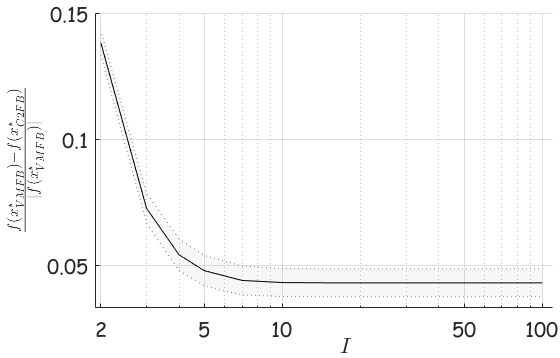}
    &   \includegraphics[width=4.0cm]{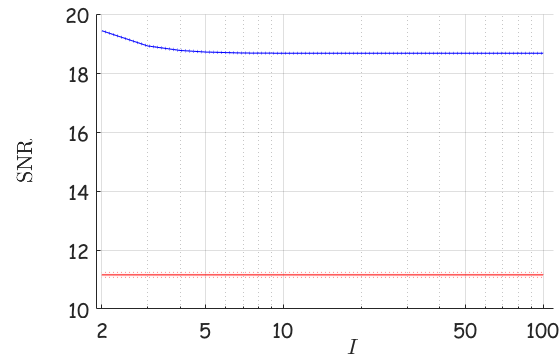}  
    &   \includegraphics[width=4.0cm]{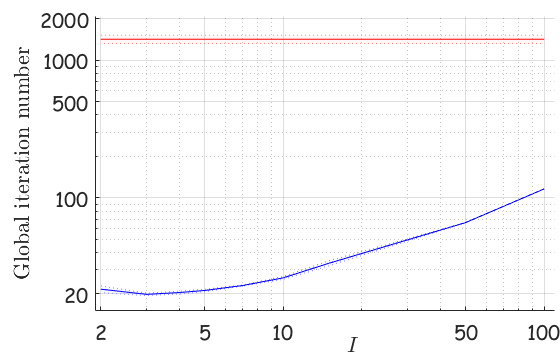}   \\[-0.1cm]
        \includegraphics[width=4.0cm]{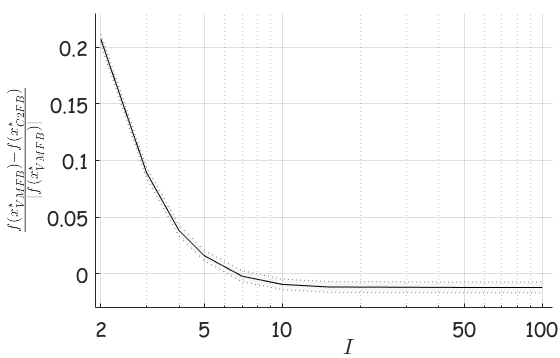}
    &   \includegraphics[width=4.0cm]{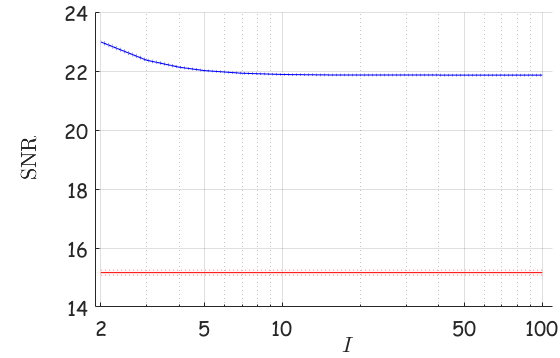}   
    &   \includegraphics[width=4.0cm]{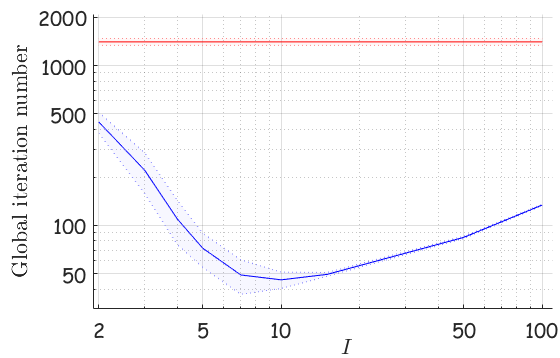}
    \end{tabular}
    
    \vspace{-0.4cm}
    
    \caption{\label{Fig:results_pwr}\footnotesize 
    Results for the $\ell_\rho^\rho$ penalization, with $\rho=10^{-3}$, considering a noise level with (top) iSNR $= 20$ dB (SNR of $y$ $ \approx 18$~dB) and (bottom) iSNR $= 25$ dB (SNR of $y$ $ \approx 21$~dB).
    Comparison between the proposed approximation algorithm~\eqref{algo:lp_FB} considering different fixed numbers of iterations in the inner-loop $I_k \equiv I \in \{2, \ldots, 100\}$ (blue curves, using a log scale for the horizontal axis), and the VMFB algorithm with the exact proximity operator of the $\ell_\rho^\rho$ penalization computed as per \eqref{eq:prox_lp} (red curves). Note that this proximity operator necessitates sub-iterations to be computed. The red curves are constant since when the proximity operator is computed exactly, there is no inner-iterations for the reweighting.
    From left to right: 
    values of the objective function at convergence $f(x^\star)$ (linear scale);
	SNR values in dB (linear scale); 
	and global number of iterations needed to reach convergence (log scale). 
    For the proposed method, the global number of iterations needed to reach convergence corresponds to $K^\star \times I$, where $K^\star \in \NN$ is the number of outer-iterations in algorithm~\eqref{algo:lp_FB}. 
    The continuous lines are the average values obtained over 50 realizations of random noise, and the dotted lines show the associated results within 1 standard deviation around the mean. 
    }
\end{figure}

In this section we present the simulation results obtained when solving the problem described at the beginning of the section, using the $\ell_\rho^\rho$ regularization described in section~\ref{Ssec:lpnorm}, with $\rho=10^{-3}$. More precisely, we propose to 
\begin{equation}    \label{eq:pb_l01}
    \minimize{x\in \RR^N} \Big\{ f(x) := \frac12 \| Hx - y \|^2 + \theta \widetilde{\ell}_{10^{-3},\varepsilon}(Wx)^{10^{-3}} \Big\},
\end{equation}
where $\widetilde{\ell}_{10^{-3}, \varepsilon}$ is the approximation of the $\ell_{10^{-3}}$ norm defined in \eqref{def:mod_ellp}, $W \colon \RR^N \to \RR^N$ models the Db8 wavelet transform \cite{Daubechies1998}, with $4$ decomposition levels, and $\theta>0$ and $\varepsilon>0$ are chosen to maximize the reconstruction quality. In our experiments we have $(\theta, \varepsilon) = (2\times 10^2, 10^{-5})$ (resp. $(\theta,\varepsilon) = (10^3, 10^{-5})$) for iSNR~$=25$~dB (resp. $20$~dB).
Problem~\eqref{eq:pb_l01} can be solved using our C2FB algorithm, with algorithm~\eqref{algo:lp_FB}. In our experiments, we run C2FB for different number of inner-iterations $I \in \{2, \ldots, 100\}$.
In the limit case when $\varepsilon=0$, then $\widetilde{\ell}_{\rho,\varepsilon}$ reduces to the exact $\ell_{\rho}$ norm, and problem~\eqref{eq:pb_l01} can be solved using the VMFB algorithm, where the proximity operator of the $\ell_{\rho}$ norm is given by \eqref{eq:prox_lp}. Note that even if the proximity operator of the $\ell_{\rho}$ norm has an explicit formula, in practice it requires to compute sub-iterations. 
%

Results are given in Figure~\ref{Fig:results_pwr}, with (top row) iSNR~$=20$~dB and (bottom row) iSNR~$=25$~dB. 
The left plots show $C(x^\star_{VMFB}, x^\star_{C2FB})$ as a function of $I$. 
For the middle and right plots, the blue curves are obtained with C2FB, and the red curves are obtained using VMFB where the proximity operator is computed using equation~\eqref{eq:prox_lp} combined with a Newton method. 
The central (resp. middle) plots show SNR values (resp. total iteration number needed to reach convergence) as a function of $I$. 

In this experiment, the conclusions are slightly different depending on the considered noise level.
For the noise level corresponding to iSNR~$=20$~dB (top row), the criteria $C(x^\star_{VMFB}, x^\star_{C2FB})$ is decreasing when $I$ increases, and is positive for all values of $I$, showing that the proposed C2FB algorithm provides a better critical point than VMFB independently from the number of inner-iterations. 
In particular, the best results are obtained when taking $I=2$. This conclusion is also true when observing the middle and right plots. For the SNR (see middle plots), C2FB leads to results with SNR~$=19.4$~dB for $I=2$, decreasing to SNR~$=18.7$~dB when $I\ge 5$. In comparison, the SNR for the estimate obtained with VMFB is equal to $11.2$~dB. On the right plots, we observe that the total iteration number needed to reach convergence increases with $I$ for C2FB, starting at $K^\star \times I = 20$ for $I=2$ and finishing at $K^\star \times I = 116$ for $I=100$. For comparison, VMFB necessitates $1423$ iterations to reach convergence. It is worth noticing that for this penalization function, unlike our method, the VMFB algorithm necessitates sub-iterations to compute the proximity operator.
For the noise level corresponding to iSNR~$=25$~dB (bottom row), the criteria $C(x^\star_{VMFB}, x^\star_{C2FB})$ decreases when $I$ increases, and is positive for $I \le 5$. This shows that the critical point obtained with C2FB is better than the one obtained with VMFB only for a small number of inner-iterations $I$. For the reconstruction quality (see middle plots), the results obtained with C2FB have a higher SNR, equal to $23$~dB for $I=2$, and decreasing to $22$~dB for $I\ge 5$. For comparison, the SNR of the estimate obtained with VMFB is equal to $15.2$~dB. Finally, for the computational cost (see right plots), we observe that the total number of iterations $K^\star \times I$ needed by C2FB to reach convergence, is decreasing for $I \le 10$ and increasing for $I\ge 10$. When $I=2$ (i.e. corresponding to the best reconstruction, both in terms of objective value and reconstruction quality), C2FB needs $K^\star \times I \approx 440$ iterations to reach convergence. This total iteration number drops to $45$ for $I=10$. For comparison, VMFB requires $1400$ iterations to reach convergence (without counting the sub-iterations to compute the proximity operator).
We can conclude that the proposed approach provides a good alternative to the classic VMFB method, in terms of both quality reconstruction and convergence speed. In addition, the proposed C2FB algorithm outperforms as well state-of-the-art methods to minimize composite functions, obtained when $I \to \infty$ (see \cite{Drusvyatskiy2016, Geiping2018, Ochs2015siam, Ochs2018JOTA}).

\bibliographystyle{plain}
\bibliography{abbr,references}

\end{document}